\newcommand\numberthis{\addtocounter{equation}{1}\tag{\theequation}}
\newtheorem{theorem}{Theorem}[section]
\newtheorem{lemma}[theorem]{Lemma}
\newtheorem{prop}[theorem]{Proposition}
\newtheorem{cor}[theorem]{Corollary}
\newtheorem{thm}[theorem]{Theorem}
\newtheorem{lem}[theorem]{Lemma}
\newtheorem*{cor*}{Corollary}
\newtheorem*{thm*}{Theorem}
\newtheorem*{lem*}{Lemma}
\newtheorem*{prop*}{Proposition}
\theoremstyle{definition}
\newtheorem{definition}[theorem]{Definition}
\newtheorem{defn}[theorem]{Definition}
\newtheorem*{defn*}{Definition}
\theoremstyle{remark}
\newtheorem{remark}[theorem]{Remark}
\newcommand{\cA}{ A }
\newcommand{\cB}{ B }
\newcommand{\cM}{M}
\newcommand{\cN}{N}
\newcommand{\cZ}{Z}
\newcommand{\bE}{{\mathbb{E}}}
\newcommand{\bN}{{\mathbb{N}}}
\newcommand{\bZ}{{\mathbb{Z}}}
\newcommand{\C}{\mathbb C}
\newcommand{\G}{\Gamma}
\newcommand{\A}{A}
\newcommand{\B}{B}
\newcommand{\aut}{\mathrm{Aut}}
\title{Intermediate crossed product $C^*$-algebras}
\author{Tattwamasi Amrutam}
\address{Ben Gurion University of the Negev.
	Department of Mathematics.
	Be'er Sheva, 8410501, Israel.
}
\email{tattwama@post.bgu.ac.il}
\author{Ilan Hirshberg}
\address{Ben Gurion University of the Negev.
	Department of Mathematics.
	Be'er Sheva, 8410501, Israel.
}
\email{ilan@bgu.ac.il}
\author{Apurva Seth}
\address{Ben Gurion University of the Negev.
	Department of Mathematics.
	Be'er Sheva, 8410501, Israel.
}
\email{apurva@post.bgu.ac.il}
\thanks{The first-named author's research is supported by the European Research Council
(ERC) under the European Union's Seventh Framework Programme
(FP7-2007-2013) (Grant agreement No. 101078193), and by the Israel
Science Foundation (ISF 1175/18). The third named author is supported by the US-Israel Binational Science Foundation. The authors also thank the hospitality of the Fields Institute, Toronto, where the final part of this work was completed.}
\begin{document}
\begin{abstract}
Let $B$ be a separable $C^*$-algebra, let $\Gamma$ be a discrete 
countable group, let $\alpha \colon \Gamma \to \aut(B)$ be an action, and let 
$A$ be an invariant subalgebra. We find certain freeness conditions which guarantee that any 
intermediate $C^*$-algebra $A \rtimes_{\alpha,r} \Gamma \subseteq C \subseteq B 
\rtimes_{\alpha,r} \Gamma$ is a crossed product of an intermediate invariant 
subalgebra $A \subseteq C_0 \subseteq B$ by $\Gamma$. Those are used to generalize related results by Suzuki.
\end{abstract}
\maketitle
\section{Introduction}
Let $\Gamma$ be a discrete group. Let $\B$ be a $\Gamma$-$C^*$-algebra, that 
is, a $C^*$-algebra along with an action $\alpha$ of $\Gamma$ on it by 
automorphisms. If 
$\A \subseteq \B$ is a $\Gamma$-invariant subalgebra, then this inclusion 
induces a natural inclusion of the reduced crossed products, $\A 
\rtimes_{\alpha,r} 
\Gamma \subseteq \B \rtimes_{\alpha,r}  \Gamma$ (whereby slight abuse of 
notation, we use $\alpha$ to denote the restriction of the action to the 
invariant subalgebra $A$). Suppose we have an intermediate 
$C^*$-algebra $\A \rtimes_{\alpha,r}  
\Gamma \subseteq C \subseteq \B \rtimes_{\alpha,r}  \Gamma$. Our goal in this 
paper is to 
find conditions that ensure that any such intermediate algebra $C$ arises from 
an intermediate $\Gamma$-invariant subalgebra $A \subseteq C_0 \subseteq B$.
Related questions were studied under various other hypotheses in 
\cite{Suz, AK,A19, ryo2021remark,rordam2021irreducible}. In 
\cite{A19}, the first named author considers the case in which 
$A = \C$, the group $\Gamma$ is $C^*$-simple and the kernel of the action of 
$\Gamma$ on $A$ is sufficiently large. In \cite{AK}, the first named author and 
Kalantar studied the case in which $A = \C$, $B = C(X)$, the group $\Gamma$ 
is $C^*$-simple, and the action of $\Gamma$ on $X$ is minimal, and the result is 
that intermediate subalgebras are simple. In \cite{rordam2021irreducible}, one  
considers, among other things, the case in which $A = \C$, the group $\Gamma$ is 
$C^*$-simple and finds conditions for the inclusion to be irreducible. In 
\cite{Suz}, Suzuki addresses the case in which $A = C(X)$ and $\Gamma$ acts 
freely on it, as well as the von Neumann algebra analog. He also considers 
the case in which $A$ and $B$ are noncommutative von-Neumann algebras, as well certain actions on $C^*$-algebras, including some noncommutative Bernoulli shifts; see also \cite{ryo2021remark} for more on Suzuki's 
conditions. For somewhat related work concerning the structure of intermediate 
$C^*$-algebras and von-Neumann algebras of the form $A \subseteq C \subseteq A 
\rtimes_{\alpha,r}  \Gamma$, we refer the reader to 
\cite{choda1978galois,izumi1998galois,Cameron2015BimodulesIC,cameron2016intermediate}.

 Our main theorem in this paper is the following: the definitions of the 
 conditions in the theorem are provided in Section \ref{Sec: Prelim}. 
\begin{theorem}
\thlabel{mainintmresult}
Let $\Gamma$ be a countable discrete group with property AP. Let $B$ be a 
separable $C^*$-algebra, and let $\alpha \colon \Gamma \to \aut(B)$ be an 
action. Let $A$ be a $\Gamma$-invariant subalgebra of $B$. 
Let $\omega$ be a free ultrafilter.
Assume that one of the following conditions holds:
\begin{enumerate}
	\item \label{mainthm:condition:A**}
	$A$ contains an approximate identity of $B$ and for every central $\alpha^{**}$-invariant projection $P\in B^{**}$ such that $PB^{**}P$ is separable, we have 
	\[
	(PAP)'\cap (PA^{**}P)^{\omega}= (PBP)'\cap (P A^{**}P)^{\omega}
	\]
	and $\alpha^{**}:\G\to\aut(PA^{**}P)$ is centrally free. 
	\item \label{mainthm:condition:Rokhlin-dimension} The action of $\Gamma$ on the inclusion $A \subseteq B$ has pointwise finite relative Rokhlin dimension.
\end{enumerate} 
Then 
any intermediate $C^*$-algebra $C$ with $A\rtimes_{\alpha,r}  \G\subset 
C\subset B\rtimes_{\alpha,r}  \G$ is of the form 
$B_1\rtimes_{\alpha,r}  \Gamma$, where $B_1$ is a $\Gamma$-invariant 
$C^*$-subalgebra of $B$.
\end{theorem}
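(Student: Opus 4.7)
The plan is to set $B_1 := C \cap B$, show $B_1$ is a $\G$-invariant $C^*$-subalgebra of $B$, and then prove $C = B_1 \rtimes_{\alpha,r}\G$.  $\G$-invariance is immediate: for $b \in B_1 \subseteq C$, $\alpha_g(b) = u_g b u_g^*$ lies in both $C$ (since $u_g \in A\rtimes_{\alpha,r}\G \subseteq C$) and $B$, hence in $B_1$.  The inclusion $B_1 \rtimes_{\alpha,r}\G \subseteq C$ is clear from $B_1 \subseteq C$ and $u_g \in C$.  For the reverse inclusion, let $E \colon B\rtimes_{\alpha,r}\G \to B$ be the canonical conditional expectation and $E_g(x) := E(x u_g^*)$ the $g$-th Fourier coefficient.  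Property AP supplies a net of finitely supported multipliers $m_{\phi_i}$ satisfying $m_{\phi_i}(c) = \sum_{g \in \mathrm{supp}(\phi_i)} \phi_i(g) E_g(c) u_g \to c$ in norm, so once we verify $E_g(c) \in B_1$ for every $c \in C$ and $g \in \G$, each $m_{\phi_i}(c)$ lies in $B_1 \rtimes_{\alpha,r}\G$ and passage to the limit yields $c \in B_1 \rtimes_{\alpha,r}\G$.  Since $E_g(c) = E(c u_g^*)$ with $cu_g^* \in C$, the entire theorem therefore reduces to the single claim
\begin{equation*}
    E(c) \in C \quad \text{for every } c \in C.
\end{equation*}

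To establish this claim I would use a Popa-style averaging argument, producing a sequence $(v_n) \subset A \subseteq C$ such that $v_n c v_n^* \to E(c)$ in norm.  A direct computation on finite Fourier sums $\sum_{g\in F} b_g u_g$ gives $v_n(\sum_g b_g u_g)v_n^* = \sum_g v_n b_g \alpha_g(v_n^*) u_g$, so the required asymptotic properties of $(v_n)$ are $[v_n, b] \to 0$ for every $b \in B$, $v_n \alpha_g(v_n^*) \to 0$ for every $g \in \G \setminus \{e\}$, and $v_n^* v_n$ acting as an approximate unit on the relevant portion of $B$; an $\ep/3$ argument together with density of finite Fourier sums then extends the conclusion from finitely supported elements to arbitrary $c \in C$.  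Under hypothesis \eqref{mainthm:condition:A**}, the sequence $(v_n)$ is built as follows: reduce to a separable $\alpha^{**}$-invariant cut-down $PA^{**}P$ with $PB^{**}P$ separable and capturing the data of $c$; apply central freeness of $\alpha^{**}|_{PA^{**}P}$ to produce an averaging element in $(PA^{**}P)^{\omega} \cap (PAP)'$; by the commutant equality $(PAP)'\cap (PA^{**}P)^{\omega} = (PBP)' \cap (PA^{**}P)^{\omega}$ this element automatically commutes with $PBP$; finally, a Kaplansky density argument represents the ultrapower element by a genuine sequence in $A$.  Under hypothesis \eqref{mainthm:condition:Rokhlin-dimension}, the sequence $(v_n)$ is produced directly from the approximately central Rokhlin-type positive elements in $A^{\omega}$ afforded by pointwise finite relative Rokhlin dimension, with the orthogonality of $F$-translates playing the role of $v_n \alpha_g(v_n^*) \to 0$.

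The principal obstacle is the averaging step: central freeness alone yields central sequences in $A^{**}$ that commute only with $A$, whereas the computation above requires them to commute with all of $B$; this is precisely why the commutant equality is built into hypothesis \eqref{mainthm:condition:A**}, and it is the main conceptual difference from the freely-acting commutative setting treated by Suzuki.  Subsidiary technical points include the non-unitality of $A$ inside $B$, handled by the standing assumption that $A$ contains an approximate identity of $B$, and the separability reductions (enabled by separability of $B$) needed to make ultrapower machinery applicable to cut-downs of $A^{**}$.
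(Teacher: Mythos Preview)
Your reduction to the single claim $E(c)\in C$ via property AP is correct and is exactly what the paper does (invoking \cite[Proposition~3.4]{Suz17}). The gap is in the averaging step that follows.

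You propose a single sequence $(v_n)\subset A$ with $v_n c v_n^*\to E(c)$, requiring simultaneously $[v_n,b]\to 0$ for all $b\in B$, $v_n\alpha_g(v_n^*)\to 0$ for every $g\neq e$, and $v_n^*v_n$ acting as an approximate unit. The Rokhlin machinery does not supply such an element. What it gives, for each fixed $s\neq e$, is a \emph{tower} of mutually orthogonal positive contractions $p_1,\ldots,p_N\in A$ approximately permuted by $\alpha_s$; the relevant CPC map is $x\mapsto\sum_l p_l x p_l$ (or $\sum_{l,k}\sqrt{f_k^{(l)}}\,x\,\sqrt{f_k^{(l)}}$ in the Rokhlin-dimension case), a sum of compressions rather than a single conjugation. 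There is no way to collapse the tower into one $v$ with both $v\alpha_s(v^*)\approx 0$ and $v^*v\approx 1$: taking $v=p_1$ kills the $s$-coefficient but $v^*v$ is far from a unit, while a phase-weighted sum $v=\sum_l\zeta^l p_l$ gives $v\alpha_s(v^*)\approx\zeta^{-1}\neq 0$. Moreover, the tower built for $s_1$ has no reason to kill the $s_2$-coefficient, so handling a finite Fourier sum $\sum_{i=1}^m b_i\lambda_{s_i}$ requires an \emph{iteration}: the paper applies a tower for $s_1$, then composes with a tower for $s_2$ chosen approximately central relative to the output of the first step, and so on through $s_m$ (this is the induction occupying most of the proof). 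Your $\varepsilon/3$ sketch does not account for this composition, and a single $v_n$ cannot.

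There is a second gap specific to hypothesis~(\ref{mainthm:condition:A**}). The Rokhlin projections live in $PA^{**}P$, and Kaplansky density transports them to elements of $A$ only in the $*$-strong topology. Consequently, the paper gets $\Psi_\beta(c)\to E(c)$ only $\sigma$-strongly (equivalently, in each seminorm $\|\cdot\|_\varphi$), not in norm; it then invokes Hahn--Banach separation to conclude that $E(c)$ lies in the norm closure of the convex hull of the $\Psi_\beta(c)$, hence in $C$. Your claim of direct norm convergence is unjustified in this case, and without the convexity/Hahn--Banach step the argument does not close.
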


We show that the condition on central freeness holds if $A$ is a type I $C^*$-algebra which admits a $\G$-invariant composition series and the action on $\hat{A}$ is free; this generalizes the freeness condition from \cite{Suz} where $A$ is assumed to be commutative.

The assumption of property AP on the group $\Gamma$ cannot generally be relaxed, as shown in \cite{Suz}. Property AP (the \emph{approximation 
property}) was introduced by Haagerup-Kraus in \cite{haagerup1994approximation} 
where it was shown that every weakly amenable group has property AP (see 
\cite[Theorem~1.12]{haagerup1994approximation}). In particular, every amenable 
group, being weakly amenable, has the approximation property but also 
hyperbolic group also has the approximation property~\cite{ozawa2008weak}. 
Roughly speaking, property AP allows us to recover enough information on 
crossed-product elements by considering Fourier coefficients. We refer the 
reader to \cite[Chapter~12]{BroOza08} for further details on the approximation 
property.

We present now a brief outline of the proof.  Let $\mathbb{E} \colon B 
\rtimes_{\alpha,r}\Gamma \to B$ be the canonical conditional expectation 
associated with 
the crossed product. To show that $C$ is a crossed product, using 
property AP, it is sufficient to establish that $\mathbb{E}(C) 
= C$. To this end, we consider the set $V$ of completely contractive maps 
$\psi \colon B \rtimes_{\alpha,r} \Gamma \to B \rtimes_{\alpha,r} \Gamma$ such 
that $\psi(C) \subseteq C$, and it suffices to show that for any $x \in C$, the 
element $\mathbb{E}(x)$ is in the norm closure of $V_x = \{\psi(x) \mid \psi 
\in V\}$. As the set $V_x$ is convex, using the Hahn-Banach theorem, it 
suffices 
to show, in fact, that $\mathbb{E}(x)$ is in the weak closure of $V_x$.  In the 
the first case, we construct a net of completely positive maps in $V_x$ which 
converge weakly 
to $\mathbb{E}(x)$ by considering the induced action on $A^{**}$, and 
successively using strengthened versions of Rokhlin-type theorems due to 
Connes~\cite{connes1975outer} and Ocneanu~\cite{ocneanu2006actions}, which we 
derive in this paper, applied to actions by various single elements. Though we 
apply the strengthened version of Ocneanu's theorem just for finite cyclic 
groups, we prove it in greater generality, as it may be of independent 
interest. Under the assumption of pointwise finite Rokhlin dimension, the idea 
is similar, although somewhat more straightforward, as we can construct the 
approximations directly in the norm.

Once we find the Rokhlin towers in $ A ^{**}\subset B ^{**}$, we use a modified 
version of the Kaplansky density theorem to obtain a net of orthogonal elements 
in $ A $. We then proceed to piece together these elements to form a net of 
uniformly bounded completely positive contractive maps $\Psi_{\nu}$ which 
converge to $\mathbb{E}$ $\sigma$-weakly (also see \thref{correctiveepsilon} 
and the remark following it).  Finally, we use the Hahn-Banach separation 
theorem to conclude that the canonical conditional expectation $\bE$ lies in 
the closure of a convex combination of such $\Psi_v$'s.

The paper is organized as follows. We 
review some preliminary notions. In 
Section~\ref{relativizedrokhlinconnes}
we prove the versions we need of the noncommutative Rokhlin Theorems of Connes 
(\thref{thm:relativizedconnes}) and of Ocneanu 
(\thref{thm:relativizedocneanu}). In 
Section~\ref{maintheoremproof}, we prove our main result, 
\thref{mainintmresult}. Finally, in Section~\ref{Examples}, we give some natural examples of inclusions and actions which fit into the realm of \thref{mainintmresult}.
\section{Preliminaries}
\label{Sec: Prelim}
Throughout this paper, $\Gamma$ will denote a discrete countable group. We 
denote the cardinality of a set $S$ by $|S|$. If $S\subset\Gamma$ is finite, we 
sometimes write $S\subset\subset \Gamma$. 

\subsection{Monotileable Amenable groups and paving systems}
We begin by reviewing the definition of monotileable groups from \cite{weiss2001monotileable}.

\begin{defn}\thlabel{def:monotileable}
A \emph{monotile} $T$ in a discrete group $\G$ is a finite set such that 
\begin{enumerate}
\item $\exists$ $C$ such that $\{cT: c\in C\}$ is a covering of $\G$.
\item $cT\cap c'T=\emptyset$, for $c\neq c'$.
\end{enumerate}
\end{defn}
An discrete group $\G$ is said to be \emph{monotileable} if it admits a 
F{\o}lner sequence consisting of monotiles. To our knowledge, it is still not 
known whether all amenable groups are monotileable. We need a much 
weaker property. We say that $\Gamma$ \emph{admits large monotiles} if for any 
finite subset $F \subset \Gamma$ there exists a monotile $T \subset \Gamma$ 
such that $F \subset T$. We do not know whether groups exist that do not satisfy this property. For example, it is easy to see that any residually finite group admits large monotiles.

We briefly recall some definitions concerning paving systems; see 
\cite[Chapter~3]{ocneanu2006actions} for more details.
\begin{defn}
For $K\subset\subset \G$ and $\delta>0$, we say that a non empty set $S$ of 
$\G$ is $(\delta,K)$-\emph{(right) invariant} if it is finite and $\lvert 
S\cap\bigcap_{g\in K}Sg\rvert>(1-\delta)\lvert S \rvert$.
\end{defn}
\begin{defn}
A system $\{K_i\}_{i\in I}$ of finite sets in $\G$ are said to be 
$\varepsilon$-disjoint for $\varepsilon>0$, if there exist subsets 
$K_i'\subseteq 
K_i$ for all 
$i\in I$ such that $\lvert K_i'\rvert\geq (1-\varepsilon)\lvert K_i\rvert$ and 
$(K_i')_{i \in I}$ are disjoint.
\end{defn}
\begin{defn}
A system $K_1,K_2,\ldots,K_N$ of finite subsets of $\G$ 
\emph{$\varepsilon$-pave} 
the finite subset $S$ of $\G$ if there are subsets $\{B_1,B_2,\ldots,B_N\}$ of 
$\G$, called the \emph{paving centers}, such that 
\begin{enumerate}
\item ${\cup_i}K_iB_i\subset S$
\item $\lvert S\setminus {\bigcup_{i\in I} }K_iB_i\rvert<\varepsilon\lvert 
S\rvert$
\item $(K_ib)_{b\in B_i, i \in I}$ are $\varepsilon$-disjoint
\end{enumerate}
\end{defn}
\begin{defn}
We call $\{K_1,K_2,\ldots,K_N\}$ an \emph{$\varepsilon$-paving system of sets}, 
if 
there exist $\delta > 0$ and $F\subset\subset \G$  such that 
$\{K_1,K_2,\ldots,K_N\}$ 
$\varepsilon$-pave any $(\delta,F)$-invariant subset $S$ of $\G$.
\end{defn}
 We conclude this discussion with the following  observation concerning 
 paving systems in amenable groups that admit large monotiles.

\begin{lem}\thlabel{prop:monotileableepsilon}
	Let $\G$ be a group, and let $T$ be a monotile in $G$. Let $\varepsilon>0$. 
	Then $T$ $\varepsilon$-paves any finite $(\varepsilon,T^{-1}T)$-invariant 
	set $S$ 
	in $G$. 
\begin{proof}
Since $T$ is a monotile, by \thref{def:monotileable}, there exists $C\subseteq 
\G$ such that $\{cT:c\in C\}$ forms a disjoint covering of $\G$.
Let $S$ be an $(\varepsilon,TT^{-1})$-invariant subset of $G$. Define 
$C_0:=\{c: cT\subseteq S\}$. Then, $C_0T\subseteq S$. Define 
$\gamma:\G\to C$ by requiring that $\gamma(g)$ is the unique element in $\G$ 
such that 
$g\in \gamma(g)T$. Then, for $g\in S\setminus C_0T$, we have $g\in S$ and 
$g\notin C_0T$, hence $\gamma(g)T\nsubseteq S$, because if $\gamma(g)T\subseteq 
S$, 
then $\gamma(g)\in C_0$ and so $g=\gamma(g)t\in\gamma(g)T\in C_0T$, which is a 
contradiction. Hence, there exists some $t'\in T$ such that $\gamma(g)t'\notin 
S$. Setting $t = \gamma(g)^{-1}g \in T$, this implies 
$gt^{-1}t'=\gamma(g)t'\notin S$, so that $g\notin 
\bigcap_{h\in T^{-1}T}Sh$. Thus, $S\setminus C_0T\subseteq 
S\setminus\bigcap_{h\in T^{-1}T}Sh $. Now, since $S$ is taken to be 
$(\varepsilon, 
T^{-1}T)$-invariant, we have $\lvert\bigcap_{h\in TT^{-1}} 
Sh\rvert>(1-\varepsilon)\lvert S\rvert$, so that $\lvert S\setminus 
C_0T\rvert\leq\lvert S\setminus \bigcap_{h\in T^{-1}T} Sh 
\rvert<\varepsilon\lvert 
S\rvert$. Thus, $T$ $\varepsilon$-paves any $(\varepsilon, T^{-1}T)$-invariant 
subset 
$S$ of $\G$.
\end{proof}
\end{lem}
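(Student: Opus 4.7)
My plan is to read off the paving center directly from the tiling structure of $T$. Since $T$ is a monotile, I fix $C \subseteq \G$ with $\{cT : c \in C\}$ a disjoint covering of $\G$, and take as paving center $B := \{c \in C : cT \subseteq S\}$. Then $TB \subseteq S$ by construction (condition (1) of $\varepsilon$-paving), and the family $(Tb)_{b \in B}$ is in fact fully disjoint, being a subfamily of $(cT)_{c \in C}$, which is strictly stronger than the $\varepsilon$-disjointness required by condition (3). Both of these are essentially free from the monotile hypothesis; the substantive task is to verify condition (2), namely the bound $|S \setminus TB| < \varepsilon |S|$.

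To obtain that bound, I would show the inclusion $S \setminus TB \subseteq S \setminus \bigcap_{h \in T^{-1}T} Sh$, after which the $(\varepsilon, T^{-1}T)$-invariance of $S$ gives exactly what is needed. The key observation is that an element of $S$ lying outside $TB$ is necessarily ``pushed out of $S$'' by some element of $T^{-1}T$. To make this precise, define $\gamma : \G \to C$ by requiring $g \in \gamma(g) T$. For $g \in S \setminus TB$ one has $\gamma(g) \notin B$, so $\gamma(g) T \not\subseteq S$, which yields some $t' \in T$ with $\gamma(g) t' \notin S$. Writing $g = \gamma(g) t$ for the unique $t \in T$, this becomes $g \cdot (t^{-1}t') \notin S$ with $t^{-1} t' \in T^{-1}T$. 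Since $T^{-1}T$ is symmetric, this is exactly the statement that $g \notin Sh$ for some $h \in T^{-1}T$.

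I do not anticipate a genuine obstacle: the whole argument is careful bookkeeping of the monotile partition against the right-invariance condition. The one point where attention is required is the switch between ``$g$ is pushed out by an element of $T^{-1}T$'' and ``$g \notin Sh$ for some $h \in T^{-1}T$'', which relies on $T^{-1}T$ being closed under inversion; this is a one-line check. Once the inclusion above is in hand, the desired estimate is immediate.
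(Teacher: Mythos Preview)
Your proposal is correct and follows essentially the same argument as the paper: the same paving center $B=C_0=\{c\in C:cT\subseteq S\}$, the same map $\gamma$, the same inclusion $S\setminus C_0T\subseteq S\setminus\bigcap_{h\in T^{-1}T}Sh$, and the same appeal to $(\varepsilon,T^{-1}T)$-invariance for the final count. The only remark is a harmless left/right slip in your write-up (you write $TB$ and $(Tb)_{b\in B}$ while your definition gives $BT$ and $(bT)_{b\in B}$); the paper has the same convention mismatch, and your subsequent computations with $\gamma(g)t$ are consistent with centers on the left.
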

\subsection{Ultraproduct von Neumann Algebras and $C^*$-to-$W^*$-Ultraproducts}
We briefly recall the definitions for ultraproduct von Neumann algebras; see 
\cite[Chapter~5]{ocneanu2006actions} for more details. Fix a free 
ultrafilter $\omega$ on $\bN$. Let $\cM$ be a separable von 
Neumann algebra and let $\cM_*$ be its predual. For $a\in M$ and $\varphi\in 
\cM_*$, denote by $a\varphi,\, \varphi a,\, [a,\varphi]\in \cM_*$ the linear functionals
\[
a\varphi(x):=\varphi(ax),\,\, \varphi a(x):=\varphi(xa)\text{ and}\,\, 
[a,\varphi]=\varphi a-a\varphi.
\]For $a\in \cM$ and $\varphi\in \cM_*^+$, we define the following seminorms
\[
\|a\|_\varphi:=\varphi(a^*a)^{1/2},\,\, \|a\|_\varphi^\#:=\varphi(a^*a+aa^*)^{1/2}.
\]
Consider $\ell^\infty(\bN,\cM)$, the $C^*$-algebra of norm bounded sequences in 
$\cM$. An element $(x^{ \nu } )_{ \nu } $ of $\ell^\infty (\bN,\cM)$ is said to 
be
\begin{enumerate}
\item $\omega$-trivial if $x^{ \nu }  \underset{{ \nu } \to 
\omega}{\longrightarrow} 0$  
*-strongly.
\item $\omega$-central if for any $\varphi\in \cM_*$ we have $\|[x^{ \nu } 
,\varphi]\| 
\underset{{ \nu } \to \omega}{\longrightarrow} 0$.
\end{enumerate}
Let $\mathcal{I}_\omega(M)$ and $\mathcal{C}_\omega(M)$ be the sets of 
$\omega$-trivial 
and $\omega$-central sequences in $\cM$, respectively. Those are unital $C^*$-subalgebras of 
$\ell^\infty(\bN,\cM)$. Let $ N_\omega(M)$ be the normalizer of 
$\mathcal{I}_\omega(M)$ in $\ell^\infty(\bN,\cM)$. Then the quotient 
$C^*$-algebras 
defined as $ M^\omega:= N_\omega(M)/\mathcal{I}_\omega(M)$ and 
$ M_\omega:= C _\omega(M)/\mathcal{I}_\omega(M)$ are von Neumann algebras. 
Notice that $\cM$ and 
$\cM_\omega$ are von Neumann subalgebras of  $\cM^\omega$ (where $\cM$ is 
identified with the image of the constant sequences in   
$\ell^\infty(\bN,\cM)$). For any state
$\varphi\in \cM_*$ we define a state  $\varphi^\omega$ on  
$ M^\omega$ by 
$\varphi^\omega((x^{ \nu } )_{ \nu } )=\lim_{{ \nu } \to 
\omega}\varphi(x^\omega)$. Its restriction 
to $ M_\omega$ will be denoted 
by $\varphi_\omega$. Note that $\varphi_{\omega}$ is a normal trace on 
$ M_{\omega}$, regardless of whether $\varphi$ is a trace or not. 
Any automorphism $\alpha\in\aut(\cM)$ induces automorphisms  
$\alpha^\omega$ and $\alpha_\omega$ of $\cM^\omega$ and $\cM_\omega$ 
respectively.  This follows from the fact that for any $\varphi\in\cM_*$ and 
for any $(x^{ \nu } )_{ \nu } \in\ell^\infty(\cN,\cM)$, we have 
\[
\left\|\alpha(x^{ \nu } )\right\|_\varphi^2=\varphi\circ\alpha({x^{ \nu } 
}^*x^{ \nu } )=\left\|x^{ \nu } \right\|_{\varphi\circ\alpha}^2
\]
\[
\left\|[\varphi,\alpha(x^{ \nu } )\right\|=\left\|[\varphi\circ\alpha,x^{ \nu } 
]\right\| 
\, .
\]

The notation for $C^*$-algebras is different. Let $A$ be a 
$C^*$-algebra. We write $A_{\infty} = \ell^{\infty}(\bN,A)/C_0(\bN,A)$. One 
could replace $\infty$ with a free ultrafilter, but we don't need to do so here. Denote the quotient map by $\pi_\infty:\ell^{\infty}(\bN,A)\to A_\infty$. Now,
 $A$ embeds in $A_{\infty}$ as the $\pi_\infty$-image of the constant sequences in 
 $\ell^{\infty}(\bN,A)$, and the central sequence algebra, denoted by $A_{\infty} \cap 
 A'$ is the $\pi_\infty$-image of central sequences in $\ell^{\infty}(\bN,A)$. The (two sided) annihilator $\text{Ann}(A,A_\infty)$ is the closed ideal of $A_\infty\cap A'$ defined as the $\pi_\infty$-image of sequences $(a_n)_n\in \ell^{\infty}(\bN,A)$ such that $\|a_n a\|+\|a a_n\|\underset{{n\to\infty}}\longrightarrow 0 $, for all $a\in A$. Denote $F(A):=(A_\infty\cap A' )/ \text{Ann}(A,A_\infty)$, then any automorphism $\alpha$ of $A$ induces an automorphism of $F(A)$, which we also denote by slight abuse of notation by $\alpha$.
 
We now recall the notion of $C^*$-to-$W^*$-ultraproduct $(A,\rho)_\omega$, for a $C^*$-algebra $A$ and state $\rho$, see \cite{ando2016non} for more details. Let $L_\rho(A)$ be the closed left ideal of $\ell^{\infty}(\bN,A)$ consisting of those $(a_v)\in\ell^{\infty}(\bN,A)$ such that
 \[
 \lim_{v\to\omega}\rho(a_v^*a_v) = 0 \, .
 \]
Define $\mathcal{I}_\rho(A):=L_\rho(A)\cap L_\rho^*(A)$. Denote by $N(\mathcal{I}_\rho(A))$  the normalizer of $\mathcal{I}_\rho(A)$ in $\ell^{\infty}(\bN,A)$. The $C^*$-to-$W^*$-ultraproduct $(A,\rho)_\omega$ is defined to be the quotient $C^*$-algebra $N(\mathcal{I}_\rho(A))/\mathcal{I}_\rho(A)$.

Next, we consider inclusions $N\subset M$ of unital, separable von Neumann algebras with $B\subseteq M$ a weakly dense C*-subalgebra. Let $\varphi$ be a faithful normal state on $M$. Note that we do not necessarily have $N^\omega\subset M^\omega$. We may, nonetheless, define 
\[
B'\cap N^\omega:=\{(x^v)\in N_\omega(N):\,\, \|x^vb-bx^v\|_\varphi^\#\underset{v\to\omega}\longrightarrow 0,\,\,\forall b\in B\}/\mathcal{I}_\omega
(N)
\]

We recall the following.
\begin{prop}\cite[Proposition 3.4, Proposition 4.15]{ando2016non}\label{prop_ando_kirchberg}
Let $\varphi$ be a faithful normal state on a $W^*$-algebra $M$, and let $A\subset N$ be a weakly dense $C^*$-subalgebra. Set $\rho:=\varphi|_A$. Then there exists a $*$-isomorphism from the $C^*$-to-$W^*$-ultraproduct $(A,\rho)_\omega$ onto $N^\omega$ which maps $A'\cap (A,\rho)_\omega$ onto $N'\cap N^\omega$.
\end{prop}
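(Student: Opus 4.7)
The plan is to define $\Phi: (A,\rho)_\omega \to N^\omega$ as the map induced by the inclusion $A \hookrightarrow N$, sending the class of $(a_v) \in N(\mathcal{I}_\rho(A))$ to its class as a sequence in $\ell^\infty(\mathbb{N}, N)$, and then to verify bijectivity and the commutant correspondence. The starting observation is topological: since $\rho = \varphi|_A$ and $\varphi$ is a faithful normal state on $M \supseteq N$, the seminorms $\|\cdot\|_\rho$ and $\|\cdot\|_\rho^\#$ on $A$ are restrictions of the corresponding seminorms of $\varphi$, and $\|\cdot\|_\varphi^\#$ metrizes the $*$-strong topology on norm-bounded subsets of the separable $W^*$-algebra $N$. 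For a bounded sequence $(a_v)$ in $A$ this yields
\[
(a_v) \in \mathcal{I}_\rho(A) \iff \|a_v\|_\varphi^\# \xrightarrow{v \to \omega} 0 \iff (a_v) \in \mathcal{I}_\omega(N),
\]
so $\mathcal{I}_\rho(A) = \ell^\infty(\mathbb{N}, A) \cap \mathcal{I}_\omega(N)$. Combined with the normalizer condition, this identity makes $\Phi$ a well-defined injective $*$-homomorphism.

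The main step is surjectivity, for which I would apply the Kaplansky density theorem fiberwise. Given $[(x_v)] \in N^\omega$ with $(x_v) \in N_\omega(N)$ uniformly bounded by $C$, metrizability of the $*$-strong topology on the closed ball of radius $C$ in $N$ lets me choose, for each $v$, an element $a_v \in A$ with $\|a_v\| \le C$ and $\|a_v - x_v\|_\varphi^\# < 1/v$. Then $(a_v - x_v) \in \mathcal{I}_\omega(N)$, so $(a_v)$ inherits the normalizer property of $(x_v)$; applying the identification $\mathcal{I}_\rho(A) = \ell^\infty(\mathbb{N}, A) \cap \mathcal{I}_\omega(N)$ yields $(a_v) \in N(\mathcal{I}_\rho(A))$, so $[(a_v)]$ is a well-defined class in $(A,\rho)_\omega$ with $\Phi([(a_v)]) = [(x_v)]$.

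For the commutant assertion, the direction $\Phi(A' \cap (A,\rho)_\omega) \subseteq N' \cap N^\omega$ again follows by Kaplansky density with an $\varepsilon/3$ argument: given $[(a_v)]$ asymptotically commuting with $A$ in $\|\cdot\|_\varphi^\#$ and a fixed $x \in N$, pick a bounded sequence $b_n \in A$ with $\|b_n - x\|_\varphi^\# \to 0$ and combine this with $\sup_v \|a_v\| < \infty$ to obtain $\lim_{v \to \omega} \|[a_v, x]\|_\varphi^\# = 0$; the reverse inclusion is immediate from $A \subseteq N$. The principal obstacle I anticipate is bookkeeping in the nontracial setting, where $\|\cdot\|_\rho^\#$ is not symmetric with respect to two-sided multiplication and the normalizer $N(\mathcal{I}_\rho(A))$ enters the very definition of $(A,\rho)_\omega$; showing that Kaplansky approximants lie in $N(\mathcal{I}_\rho(A))$ must be routed through the $\mathcal{I}_\omega(N)$-identification rather than attempted by a direct algebraic argument inside $A$.
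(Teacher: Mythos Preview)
The paper does not prove this proposition; it is quoted verbatim from Ando--Kirchberg \cite{ando2016non} and used as a black box. Your outline is the natural approach and is essentially the one taken in that reference: identify the trivial ideals via $\mathcal{I}_\rho(A)=\ell^\infty(\mathbb{N},A)\cap\mathcal{I}_\omega(N)$, use Kaplansky density fiberwise for surjectivity, and handle the commutant by an $\varepsilon/3$ density argument. (The separability remark is unnecessary: faithfulness of $\varphi$ alone makes $\|\cdot\|_\varphi^\#$ determine the $*$-strong topology on bounded sets, and Kaplansky density needs no metrizability.)

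One step is thinner than you indicate. Well-definedness of $\Phi$ requires $N(\mathcal{I}_\rho(A))\subseteq N_\omega(N)$, and this does \emph{not} follow formally from the ideal identification $\mathcal{I}_\rho(A)=\ell^\infty(\mathbb{N},A)\cap\mathcal{I}_\omega(N)$ alone: given $(a_v)$ normalizing $\mathcal{I}_\rho(A)$ and an arbitrary $(x_v)\in\mathcal{I}_\omega(N)$, approximating $x_v$ by $b_v\in A$ and invoking the normalizer property on $(b_v)$ still leaves you with the error term $(a_v(x_v-b_v))$, whose membership in $\mathcal{I}_\omega(N)$ is exactly the issue you are trying to establish. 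The nontracial bookkeeping you flag at the end is precisely here, not only on the surjectivity side. In Ando--Kirchberg this is handled via an intrinsic characterization of the normalizer (equicontinuity of the maps $y\mapsto x_v y$, $y\mapsto y x_v$ for the $\|\cdot\|_\varphi^\#$-seminorms on bounded balls), which is the same for $A$ and for $N$ by density; you should invoke or reproduce that characterization rather than try to push the Kaplansky approximation through directly.
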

The following simple lemma lets us approximate projections in 
$ M_{\omega}$ with ones from $ M$, as done in 
\cite[Lemma~1.1.5]{connes1975outer}. 
\begin{lem}\thlabel{lem:connes}
Let $\varphi$ be a faithful normal state on $\cM$. Let $\{E_j\}_{j=1}^N$ be a 
partition of unity in $ M_\omega$. Then, there exists a sequence of 
partition of unity $\{e_1^{ \nu } ,e_2^{ \nu } ,\hdots,e_N^{ \nu } \}_{ \nu } $ 
in $\cM$ such that for  $j=1,2,\hdots,N$, $\{e_j^{ \nu } \}_{ \nu } $ is the representative sequence 
for $E_j$.
\end{lem}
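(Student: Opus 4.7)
The plan is to adapt the argument of \cite[Lemma~1.1.5]{connes1975outer}. I would begin by choosing arbitrary representative sequences $(f_j^\nu)_\nu$ in $\mathcal{C}_\omega(\cM)$ for each $E_j$. By taking the self-adjoint part and applying continuous functional calculus with a truncation to $[0,1]$, I may assume every $f_j^\nu$ is self-adjoint with $0 \leq f_j^\nu \leq 1$; these operations are continuous functional calculus on bounded central sequences and so preserve centrality. The hypothesis that $\{E_j\}_{j=1}^N$ is a partition of unity in $\cM_\omega$ then translates into
\[
\|(f_j^\nu)^2 - f_j^\nu\|_\varphi^\# \to 0, \quad \|f_j^\nu f_k^\nu\|_\varphi^\# \to 0 \ (j \neq k), \quad \Bigl\|\sum\nolimits_j f_j^\nu - 1\Bigr\|_\varphi^\# \to 0,
\]
all along $\omega$.

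The key elementary tool is the following: if $x \in \cM$ is self-adjoint with $0 \leq x \leq 1$, then the spectral projection $p := \chi_{[1/2, 1]}(x)$ satisfies the pointwise estimate $(p - x)^2 \leq x(1 - x)$, and therefore $\|p - x\|_\varphi^2 \leq \varphi(x - x^2)$. Applied to $f_j^\nu$, this produces a projection-valued sequence $\chi_{[1/2,1]}(f_j^\nu)$ which again represents $E_j$. That the modified sequence still lies in $\mathcal{C}_\omega(\cM)$ follows because $\chi_{[1/2,1]}$ may be approximated uniformly on the complement of a small neighborhood of $1/2$ by a continuous function $h$---for which $(h(f_j^\nu))_\nu$ is manifestly central---and the $\varphi$-spectral mass of $f_j^\nu$ near $1/2$ vanishes along $\omega$ as a consequence of $\|(f_j^\nu)^2 - f_j^\nu\|_\varphi^\# \to 0$.

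With this tool in hand, I would run an induction on $j$. Suppose pairwise orthogonal projections $e_1^\nu, \ldots, e_{j-1}^\nu$ in $\cM$ have already been constructed, with $(e_i^\nu)_\nu$ representing $E_i$ for $i < j$. Set $Q_j^\nu := 1 - \sum_{i<j} e_i^\nu$ and define
\[
e_j^\nu := \chi_{[1/2, 1]}\bigl(Q_j^\nu f_j^\nu Q_j^\nu\bigr),
\]
which is a projection dominated by $Q_j^\nu$ and hence orthogonal to each $e_i^\nu$ for $i < j$. The image of $Q_j^\nu$ in $\cM^\omega$ equals $1 - \sum_{i<j}E_i$, which dominates $E_j$, so compressing $f_j^\nu$ by $Q_j^\nu$ does not alter its class in $\cM_\omega$; applying the spectral-projection estimate then shows that $(e_j^\nu)_\nu$ still represents $E_j$. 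For $j = N$ one simply takes $e_N^\nu := Q_N^\nu$, which is automatically a projection and represents $E_N = 1 - \sum_{i<N} E_i$. The main technical difficulty at every step is guaranteeing that the new sequence remains in $\mathcal{C}_\omega(\cM)$, which is handled uniformly by the continuous-function approximation to $\chi_{[1/2,1]}$ described above.
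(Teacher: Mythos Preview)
Your proof is correct. Both you and the paper adapt \cite[Lemma~1.1.5]{connes1975outer} inductively, but the inductive schemes differ: the paper lifts the cumulative sums $E_1, E_1+E_2, \ldots, E_1+\cdots+E_{N-1}$ to an increasing chain of projections $e_1^\nu \leq x_2^\nu \leq \cdots \leq x_{N-1}^\nu$ (citing Connes' lemma as a black box at each step) and then takes successive differences, whereas you lift each $E_j$ directly by compressing a positive representative under the complement $Q_j^\nu = 1 - \sum_{i<j} e_i^\nu$ and applying the spectral cutoff $\chi_{[1/2,1]}$. Your scheme is more self-contained, since you re-derive the key estimate $(p-x)^2 \leq x(1-x)$, and orthogonality is built in by construction rather than arranged after the fact. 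One minor simplification: the centrality argument via continuous approximation of $\chi_{[1/2,1]}$ can be shortened by observing that the pointwise estimate already gives $(e_j^\nu - Q_j^\nu f_j^\nu Q_j^\nu)_\nu \in \mathcal{I}_\omega(\cM) \subseteq \mathcal{C}_\omega(\cM)$ directly, so centrality of $(e_j^\nu)_\nu$ is inherited from that of $(Q_j^\nu f_j^\nu Q_j^\nu)_\nu$.
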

\begin{proof}
Assume first $N=2$. Let $E_1,E_2\in\cM_\omega$ be a 
partition of unity. Using~\cite[Lemma~1.1.5]{connes1975outer}, take $(e_1^{ \nu 
} )_{ \nu } $ 
to be the representative sequence of $E_1$ consisting of projections 
$e_1^{ \nu } \in\cM$. Note that $(1-e_1^{ \nu } )_{ \nu } $ is a representative 
sequence for $E_2$, 
so 
$\{e_1^{ \nu } ,1-e_1^{ \nu } \}_{ \nu } $ forms a sequence of partition of 
unity in 
$\cM$. Now, Let $\{E_1,E_2,\hdots,E_N\}$ be a partition of unity in 
$\cM_\omega$. Since $E_1+E_2$ is a projection in $\cM_\omega$, we can choose a 
representative sequence $(e_1^{ \nu } )_{ \nu } $ for $E_1$ 
consisting of projections, then take $(x_2^{ \nu } )_{ \nu } $ to be the 
representative 
sequence for $E_1+E_2$  consisting of projections $x_2^{ \nu } $, which we may 
assume 
satisfies  
$e_1^{ \nu }  \leq x_2^{ \nu } $ (otherwise change $x_2^{ \nu } $ outside a 
neighborhood of $\omega$ 
to $e_1^{ \nu } $). Then, $(x_2^{ \nu } -e_1^{ \nu } )_{ \nu } $ is a 
representative sequence for $E_2$ 
consisting of projections in $\cM$. Repeating this argument for $E_1,E_2,E_3$, 
we choose $(x_3^{ \nu } )_{ \nu } $ to be representative sequence for 
$E_1+E_2+E_3$ such 
that $x_2^{ \nu }  \leq x_3^{ \nu } $. Then $(x_3^{ \nu } -x_2^{ \nu } )_{ \nu 
} $ is a representative sequence 
for 
$E_3$ consisting of projections in $\cM$. Continuing in this manner, for 
$E_1,E_2,\hdots,E_{N-1}$ we get a representative sequence 
$(x_{N-1}^{ \nu } )_{ \nu } -x_{N-2}^{ \nu } $  consisting of projections in 
$\cM$ for $E_{N-1}$. 
Finally, $(1-x_{N-1}^{ \nu } )_{ \nu } $ forms a representative 
sequence for $E_N$ consisting of projections in $\cM$. This gives us partitions 
of unity as required.

\end{proof}
\subsection{Free and centrally free actions}
Let $M$ be a separable von Neumann algebra. An automorphism $\theta\in \aut( M)$ is called properly outer if 
$\theta|_{P MP}$ is not inner for any nonzero invariant central 
projection $P$. We say that $\theta\in \aut( M)$ is strongly 
outer if the restriction of $\theta$ to the relative commutant of any countable 
$\theta$-invariant subset of $ M$ is properly outer. Let us denote by 
$\mathcal{CT}( M)$ the collection of all centrally trivial 
automorphisms on $ M$:
\[\mathcal{CT}( M)=\left\{\theta\in \aut( M): 
\theta_{\omega}=\text{id}\in\aut\left( M_{\omega}\right)\right\}
\, .
\]
An automorphism $\theta\in \aut( M)$ is called properly 
centrally non-trivial if for any invariant central projection $P\in  M$, its restriction to $P MP$ is not
centrally trivial.

Let $\Gamma$ be a discrete group acting on a  von Neumann algebra 
$ M$ by automorphisms. The action $\alpha:\Gamma\to 
\aut( M)$ is said to be free if $\alpha_g$ is properly outer for 
any $g\ne e$. The action $\alpha$ is said to be strongly free if  $\alpha_g$
is strongly outer for any $g\ne e$. We say that $\alpha$ is centrally free if  $\alpha_g$ is
properly centrally non-trivial for any $g\ne e$. We recall the following.

\begin{lem} \cite[Lemma 5.6]{ocneanu2006actions}
\thlabel{lem:ocneanu_stronglyouter}
Let $\cM$ be a separable von Neumann algebra. Let $\alpha$ be a properly centrally nontrivial automorphism of $\cM$, then $\alpha_\omega$ is a strongly outer automorphism of $\cM_\omega$. 
\end{lem}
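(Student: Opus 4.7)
I argue by contrapositive: assume $\alpha_{\omega}$ is not strongly outer on $\cM_{\omega}$ and produce a nonzero $\alpha$-invariant central projection $P\in\cM$ with $(\alpha|_{P\cM P})_{\omega}=\id$, contradicting proper central nontriviality. Unwinding the hypothesis gives a countable $\alpha_{\omega}$-invariant subset $S\subset\cM_{\omega}$, a nonzero $\alpha_{\omega}$-invariant projection $E\in Z(S'\cap\cM_{\omega})$, and a unitary $u\in E(S'\cap\cM_{\omega})E$ with $\alpha_{\omega}(x)=uxu^{*}$ on $E(S'\cap\cM_{\omega})E$. Since $u^{*}\alpha_{\omega}(u)$ is central in this corner and $\alpha_{\omega}$-invariant, a cocycle-cutting argument allows me to replace $E$ by an $\alpha_{\omega}$-invariant subprojection and arrange $\alpha_{\omega}(u)=u$ as well.

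The key observation is that $Z(\cM)$ embeds into $Z(\cM_{\omega})\subseteq Z(S'\cap\cM_{\omega})$, so $EZ(\cM)\subseteq Z(E(S'\cap\cM_{\omega})E)$. Therefore the inner automorphism $\Ad u$ fixes every element of $EZ(\cM)$, giving $E\alpha(z)=Ez$ in $\cM_{\omega}$ for every $z\in Z(\cM)$. I then define $P\in Z(\cM)$ to be the largest $\alpha$-invariant central projection below the support of $E$ in $Z(\cM_{\omega})$ (using the structural correspondence between central projections in $\cM_{\omega}$ and elements of $Z(\cM)$ available because $\cM$ is separable); this $P$ is nonzero because $E\ne 0$ and $\alpha$-invariant by construction.

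To establish $(\alpha|_{P\cM P})_{\omega}=\id$, let $(x_{\nu})_{\nu}$ be a central sequence in $P\cM P$. A diagonal reindexing argument applied to the countable family consisting of representatives $(e_{\nu})$ of $E$ (projections, via \thref{lem:connes}) and $(u_{\nu})$ of $u$ (partial isometries with $u_{\nu}^{*}u_{\nu}=u_{\nu}u_{\nu}^{*}=e_{\nu}$), lifts of elements of $S$, and a countable $\sigma$-strongly dense subset $D$ of $E(S'\cap\cM_{\omega})E$ — whose existence comes from separability of $\cM$ — produces an equivalent representative whose image in $\cM_{\omega}$ lies in the center of $E(S'\cap\cM_{\omega})E$. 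On this center $\Ad u$ acts trivially, so the image is $\alpha_{\omega}$-fixed, and translating back yields $\|\alpha(x_{\nu})-x_{\nu}\|_{\varphi}^{\#}\to 0$ for a fixed faithful normal state $\varphi$.

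The main obstacle is the reindexing step: forcing a central sequence of $P\cM P$ into the center of $E(S'\cap\cM_{\omega})E$ requires engineering asymptotic commutation with the full relative commutant $S'\cap\cM_{\omega}$, which a priori is uncountable. The argument relies on countable saturation of $\cM^{\omega}$ (available because $\cM$ is separable) to upgrade commutation with the countable dense subset $D$ to commutation with the whole algebra, combined with an Elliott-type intertwining across the countable list of constraints. This is the technical core of Ocneanu's original proof and the only step requiring substantial care.
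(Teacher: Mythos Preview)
The paper does not prove this lemma; it is quoted from Ocneanu's monograph with no argument supplied, so there is no paper-proof to compare against. Your contrapositive strategy is the right shape, but the execution of the central step has two genuine gaps.

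First, the claim that $E(S'\cap\cM_{\omega})E$ admits a countable $\sigma$-strongly dense subset is false in general: even when $\cM$ has separable predual, $\cM_{\omega}$ is typically non-separable (for the hyperfinite $\mathrm{II}_{1}$ factor $R$, the algebra $R_{\omega}$ is a non-separable $\mathrm{II}_{1}$ factor). Separability of $\cM$ does not propagate to $\cM_{\omega}$, so the countable-saturation upgrade you describe cannot be carried out as written.

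Second, and more seriously, the reindexing step does not establish what you need. If $(x_{\nu})_{\nu}$ represents $X\in(P\cM P)_{\omega}$, a fast reindexation $(x_{\sigma(\nu)})_{\nu}$ represents a \emph{different} element $X'\in\cM_{\omega}$; showing $\alpha_{\omega}(X')=X'$ says nothing about $\alpha_{\omega}(X)$. Your phrase ``equivalent representative'' suggests the reindexed sequence still represents $X$, but that is not how fast reindexation behaves. A smaller related issue: the construction of $P$ via a ``structural correspondence between central projections in $\cM_{\omega}$ and $Z(\cM)$'' is unjustified --- there is an embedding $Z(\cM)\hookrightarrow Z(\cM_{\omega})$ but no inverse in general, so extracting a nonzero $\alpha$-invariant $P\in Z(\cM)$ from $E$ requires a separate argument.
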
 
\subsection{Crossed product $C^*$-algebras}
Let $\Gamma$ be a discrete group, $ A $ be an unital $C^*$-algebra, and $\alpha \colon \Gamma \to \aut(A)$ be an action. We denote the canonical conditional expectation by $\mathbb{E} \colon A \rtimes_{\alpha,r} \Gamma \to A$. (See \cite[Proposition~4.1.9]{BroOza08}.) We shall denote the canonical unitaries in $A \rtimes_{\alpha,r} \Gamma$ by $\{\lambda_s \}_{s \in \Gamma}$. They satisfy $\lambda_s a \lambda_s^* = \alpha_s(a)$ for any $s \in \Gamma$ and any $a$ in the canonical copy of $A$ in the reduced crossed product.
\subsection{Rokhlin dimension}
We recall the definition of the Rokhlin dimension for actions of finite groups and 
of $\bZ$ from \cite{hirshberg2015rokhlin,hirshberg_phillips}. For actions of $\bZ$, we use the single tower 
version.
\begin{definition}
	Let $A$ be a separable $C^*$-algebra. 
	\begin{enumerate}
		\item Suppose $\alpha \in \aut(A)$ (thought of as an action of $\bZ$). 
		We say that $\alpha$ has \emph{Rokhlin dimension} $d$ if $d$ is the 
		least 
		non-negative integer such that for any $\varepsilon>0$, for any finite 
		subset $F \subseteq A$ and for any $N\in \bN$ there exist positive 
		contractions $\{f_k^{(l)} \mid k=1,2,\ldots,N \, ; \, l=0,1,2,\ldots 
		d\}$ 
		in $A$ such that for all $x\in F$ we have
			\begin{enumerate}
				\item $\|(\sum_{l=0}^d \sum_{k=1}^N f_k^{(l)})x-x\|<\varepsilon$.
				\item For all $j \neq k$ and for all $l$ we have 
				$\|(f_k^{(l)}f_j^{(l)})x\| < \varepsilon$.
				\item For all $l$ and for all $k$ we have $\| (\alpha(f_k^{(l)}) 
				- f_{k+1}^{(l)})x\| < \varepsilon$ with the convention that 
				$f_{N+1}^{(l)} = 
				f_1^{(l)}$. 
				\item For any indices $k,l$ we have 
				$\| [ x,f_k^{(l)} ] \| < \varepsilon$. 
			\end{enumerate} 
		\item Suppose $G$ is a finite group and suppose $\alpha \colon G \to 
		\aut(A)$. 
		We say that $\alpha$ has \emph{Rokhlin dimension} $d$ if $d$ is the 
		least 
		non-negative integer such that for any $\varepsilon>0$ and for any finite 
		subset $F \subseteq A$ there exist positive 
		contractions $\{f_g^{(l)} \mid g \in G \, ; \, l=0,1,2,\ldots 
		d\}$ 
		in $A$ such that for all $x\in F$ we have
		\begin{enumerate}
			\item $\|(\sum_{l=0}^d \sum_{g \in G} f_g^{(l)})x-x\| <\varepsilon$.
			\item For all $g \neq h$ in $G$ and for all $l$ we have 
			$\| (f_g^{(l)}f_h^{(l)})x \| < \varepsilon$.
			\item For all $l$ and for all $g,h \in G$ we have 
			$\| (\alpha_g(f_h^{(l)}) -	f_{gh}^{(l)})x \| < \varepsilon$.
			 \item For any index $l$ and for any $g 
			\in G$ we have 
			$\| [ x,f_g^{(l)} ] \| < \varepsilon$. 
		\end{enumerate} 
	\end{enumerate}
\end{definition}
For this paper, it would be enough to consider the restriction to any singly generated subgroup, so we make the following definition.
\begin{definition}
	Let $A$ be a separable $C^*$-algebra. 
	 Let $\Gamma$ be a countable discrete group, and suppose $\alpha \colon \Gamma \to 
	\aut(A)$ is an action. We say that $\alpha$ has 
	\emph{pointwise finite relative Rokhlin dimension} if for any $s \in \Gamma 
	\smallsetminus \{e\}$ the restriction of $\alpha$ to the subgroup generated 
	by $s$ has a finite Rokhlin dimension. 
\end{definition}
We now introduce a notion of Rokhlin dimension for an action on an inclusion $A \subseteq B$. The key point here is that we want the Rokhlin towers to be in $A$ but approximately central with respect to elements of $B$. 
\begin{definition}
	Let $A \subseteq B$ be an inclusion of separable $C^*$-algebras. 
	\begin{enumerate}
		\item Suppose $\alpha \in \aut(B)$ is an automorphism which leaves $A$ invariant (thought of as an action of $\bZ$). 
		We say that $\alpha$ has \emph{relative Rokhlin dimension} $d$ if $d$ 
		is the least 
		non-negative integer such that for any $\varepsilon>0$, for any finite 
		subset $F \subseteq B$ and for any $N\in \bN$ there exist positive 
		contractions $\{f_k^{(l)} \mid k=1,2,\ldots,N \, ; \, l=0,1,2,\ldots 
		d\}$ 
		in $A$ such that for all $x\in F$ we have
		\begin{enumerate}
			\item $\|(\sum_{l=0}^d \sum_{k=1}^N f_k^{(l)})x-x\| <\varepsilon$.
			\item For all $j \neq k$ and for all $l$ we have 
			$\|(f_k^{(l)}f_j^{(l)})x\| < \varepsilon$.
			\item For all $l$ and for all $k$ we have $\| (\alpha(f_k^{(l)}) 
			- f_{k+1}^{(l)})x\| < \varepsilon$ with the convention that 
			$f_{N+1}^{(l)} = 
			f_1^{(l)}$. 
			\item For any indices $k,l$ we have 
			$\| [ x,f_k^{(l)} ] \| < \varepsilon$. 
		\end{enumerate} 
		\item Suppose $G$ is a finite group and suppose $\alpha \colon G \to 
		\aut(B)$ is an action which leaves $A$ invariant. 
		We say that $\alpha$ has \emph{relative Rokhlin dimension} $d$ if $d$ 
		is the least 
		non-negative integer such that for any $\varepsilon>0$ and for any finite 
		subset $F \subseteq B$ there exist positive 
		contractions $\{f_g^{(l)} \mid g \in G \, ; \, l=0,1,2,\ldots 
		d\}$  such that for all $x\in F$ we have
		\begin{enumerate}
			\item $\|(\sum_{l=0}^d \sum_{g \in G} f_g^{(l)})x-x\| <\varepsilon$.
			\item For all $g \neq h$ in $G$ and for all $l$ we have 
			$\| (f_g^{(l)}f_h^{(l)})x \| < \varepsilon$.
			\item For all $l$ and for all $g,h \in G$ we have 
			$\| (\alpha_g(f_h^{(l)}) -	f_{gh}^{(l)})x \| < \varepsilon$.
			 \item For any index $l$ and for any $g 
			\in G$ we have 
			$\| [ x,f_g^{(l)} ] \| < \varepsilon$. 
		\end{enumerate} 
	\item Let $\Gamma$ be a countable discrete group, and suppose $\alpha \colon \Gamma \to 
	\aut(B)$ is an action which leaves $A$ invariant. We say that $\alpha$ has 
	\emph{pointwise finite relative Rokhlin dimension} if for any $s \in \Gamma 
	\smallsetminus \{e\}$ the restriction of $\alpha$ to the subgroup generated 
	by $s$ has a finite Rokhlin dimension. 
	\end{enumerate}
\end{definition}
\begin{remark}
	Suppose $A \subseteq B$ is an inclusion of separable $C^*$-algebras such that the natural inclusion from $A_{\infty}$ into $B_{\infty}$ descends to an unital homomorphism $F(A)\subseteq F(B)$. It is immediate that an action on $B$, which leaves $A$ invariant, has a finite Rokhlin dimension provided the restriction to $A$ has a finite Rokhlin dimension. We will provide in Lemma~\ref{lemma:automatic-central-sequence-inclusion} conditions which guarantee that we have such an unital homomorphism $F(A) \subseteq F(B)$.
\end{remark}

\section{von Neumann algebraic Rokhlin theorems}
\label{relativizedrokhlinconnes}
Connes' noncommutative Rokhlin theorem ~\cite[Theorem~1.2.5]{connes1975outer} 
for single automorphism is stated for automorphisms of finite von Neumann 
algebras with an invariant trace. 
We need a more general version that does not assume the trace is 
invariant. The proof of this stronger version goes through almost 
verbatim as in  \cite{connes1975outer} (also see 
\cite[Chapter~XVII]{TakesakiIII}), only that the use of the Ergodic theoretic 
Rokhlin lemma for measure preserving actions is replaced by the Rokhlin theorem 
for nonsingular actions (\cite{ornstein1980ergodic}). We nonetheless provide a
full proof for the reader's convenience. The Rokhlin theorem for essentially free measure class preserving actions of amenable groups was announced in \cite{ornstein1980ergodic}. There is a short note in \cite{ornstein1987entropy}, where the authors outline how to generalize the result from measure preserving systems to measure class preserving ones. We could not find complete proof in published literature; however, one can be found in a Ph.D.~thesis by Jarrett \cite[Appendix~B]{jarrett2018non}.

\begin{prop}\thlabel{prop:connes}
Let $( N,\tau)$ be a finite von Neumann algebra, with $\tau$ a 
faithful normal tracial state. Assume that 
$\alpha:\mathbb{Z}\to\aut( N)$ is free. Then, for any positive 
integer $n$ and for any $\varepsilon>0$, there exists a partition of unity 
$\{E_j\}_{j=1}^n$ such that
\[\left\|\alpha_1(E_j)-E_{j+1}\right\|_\tau<\varepsilon,~j=1,2,\ldots,n,\]
where $E_{n+1}=E_1$
\begin{proof}
Let $( N,\tau)$ be a finite von Neumann algebra. We can decompose 
$ N$ into an infinite direct sum $\overline{\bigoplus}_{k=0}^{\infty} 
 N_k$ where each summand is invariant, the action on the center 
$ Z( N_0)$ is free, the action on the center 
$ Z( N_1)$ is trivial, and for each $k>0$, the action on 
$ Z( N_k)$ is $k$-periodic and there is no non-zero invariant 
projection $P \in  Z( N_k)$ such that the restriction to $P 
 Z( N_k)$ has a shorter period. It suffices to prove the 
statements for each of those cases separately.

Let us first assume that $\alpha$ acts trivially on the center 
$ Z( N)$. Let $n$ be a fixed positive integer and 
$\varepsilon>0$ be given. Denote by $\mathcal{U}( N)$ the 
unitary group $ N$. Recall that $\|x\|_1=\tau(|x|)$.  Consider the collection 
$\mathcal{E}=\left\{\left(E_1,E_2,\ldots,E_n; U\right)\right\}$, where
\begin{enumerate}
    \item $\left\{E_1,E_2,\ldots,E_n\right\}$ are mutually orthogonal equivalent projections.
    \item $U\in \mathcal{U}( N)$ and $\|U-1\|_1\le \varepsilon 
    \tau\left(\sum_{i=1}^nE_i\right)$.
    \item $U\alpha_1(E_i)U^*=E_{i+1}$, $E_{n+1}=E_1$.
\end{enumerate}
It follows from \cite[Lemma~1.2.7]{connes1975outer} that $\mathcal{E}$ is a non-empty set. (It is assumed in \cite[Lemma~1.2.7]{connes1975outer} that the von Neumann algebra is countably decomposable; however, this assumption is not needed; see \cite[Lemma~1.8, Chapter-XVII]{TakesakiIII}.) We define an order on $\mathcal{E}$ by setting $\left(E_1,E_2,\ldots,E_n; U\right)\le \left(E_1',E_2',\ldots,E_n'; U'\right)$ if $E_i
\le E_i'$ for each $1\le i\le n$ and $\|U-U'\|_1\le 
\varepsilon\tau\left(\sum_{i=1}^nE_i'-E_i \right)$. Let $\mathcal{F}$ be a 
totally 
ordered subset of $\mathcal{E}$. The map $\mathcal{O}:\mathcal{F}\to [0,1]$, 
defined by
\[\mathcal{O}\left(E_1,E_2,\ldots,E_n; U\right)=\tau\left(\sum_{i=1}^nE_i\right)\]
is order-preserving. Therefore, $\mathcal{F}$ contains a cofinal sequence $$y_m=\left\{\left(E_{1,m},E_{2,m},\ldots,E_{n,m}; U_m\right)\right\}.$$
Using (2), we see that 
\begin{align*}\left\|U_{m+1}-U_m\right\|_1\le\varepsilon\tau\left(\sum_{i=1}^nE_{i,m+1}-E_{i,m}\right),\end{align*}
and hence, $\sum_{m=1}^\infty\|U_{m+1}-U_m\|_1<\infty$. Since $\mathcal{U}( N)$ 
is complete with respect to the $\|.\|_1$-norm, we see that $\lim_{m \to \infty} U_m=U$ 
exists and is in $\mathcal{U}( N)$. Moreover, $\{E_{i,m}\}_m$, an 
increasing sequence of projections, converges to a projection $E_i$ for each 
$i=1,2,\ldots,n$. We now observe that $\left(E_1,E_2,\ldots,E_n;U\right)$ 
satisfies (1), (2) and (3). Therefore, $\mathcal{F}$ admits a maximal element, 
which we denote by $\left(E_1,E_2,\ldots,E_n;U\right)$, by mild abuse of 
notation. We claim that $\sum_{i=1}^nE_i=1$. Suppose not. Set 
$P=1-\sum_{i=1}^nE_i\ne 0$. 
From (3), it follows that $U\alpha_1(P)U^*=P$. Therefore, 
$\tilde{\alpha}=\text{Ad}(U)\circ\alpha_1$ leaves $P NP$ invariant. Moreover, 
$\Tilde{\alpha}$ also acts freely on $P NP$.
Denote $\tau_{P NP}(\cdot)=\frac{1}{\tau(P)}\tau(\cdot)$ and we denote by 
$\|.\|_{P NP}$ the $L^1$-norm associated with $\tau_{P NP}$.
 Using \cite[Lemma~1.8, Chapter-XVII]{TakesakiIII}, we can find mutually 
 orthogonal equivalent projections $\{P_1,P_2,\ldots,P_n\}$ in $ NP$ 
 and an unitary $\Tilde{U}\in \mathcal{U}(P NP)$ such that 
 $$\left\|\Tilde{U}-P\right\|_{1,P NP}<\varepsilon\tau_{P\mathcal 
 {N}P}\left(\sum_{i=1}^nP_i\right)\text{ and 
 }\Tilde{U}\Tilde{\alpha}(P_j)\Tilde{U}^*=P_{j+1} ,~j=1,2,\ldots,n.$$
 Consequently, we observe that $\Tilde{U}U\alpha_1(P_i)U^*\Tilde{U}^*=P_{i+1}$ 
 for $i=1,2,\ldots,n$. Letting $\Tilde{U}U=V\in 
 \mathcal{U}(P NP)$, we see that $V\alpha_1(P_i)V^*=P_{i+1}$ for each 
 $i=1,2,\ldots,n$ with the convention that $P_{n+1}=P_1$. Moreover, it also 
 follows that $$\|V-E\|_{1,\tau_{ NP}}\le 
 \varepsilon\tau_{{ NP}}\left(\sum_{i=1}^nP_i\right).$$
Let $E_i'=E_i+P_i$ for each $i=1,2,\ldots,n$ and let $U'=(V+(1-P))U$.  Then $E_1', E_2',\ldots, E_n'$ are mutually orthogonal projections. Moreover, $U'\alpha_1(E_i')U'^*=E'_{i+1}$ for each $i=1,2,\ldots,n$ with the convention that $E'_{n+1}=E_1'$. Furthermore,
\begin{align*}
 \left\|V+(1-P)-1\right\|_1&
 =\tau(P)\|V-P\|_{1,\tau_{ NP}}\\
 &\le\varepsilon\tau(P)\tau_{{ NP}}\left(\sum_{i=1}^nP_i\right)\\
 &=\varepsilon\tau\left(\sum_{i=1}^nP_i\right)
\end{align*}
Therefore, we have
\begin{align*}
 \left\|U'-U\right\|_1&
 =\left\|\left(V+(1-P)-1\right)U\right\|_1\\
 &\le\varepsilon\tau\left(\sum_{i=1}^nP_i\right)
\end{align*}
Thus,
\begin{align*}
 \left\|U'-1\right\|_1&\le \left\|U'-U\right\|_1+\left\|U-1\right\|_1\\&\le  
 \varepsilon\tau\left(\sum_{i=1}^nP_i\right)+\varepsilon\tau\left(\sum_{i=1}^nE_i\right)\\&=\varepsilon\tau\left(\sum_{i=1}^nE_i'\right)
   \, .
\end{align*}
Therefore, $\left(E_1',E_2',\ldots,E_n';U'\right)\in \mathcal{E}$ and in particular, $$\left(E_1,E_2,\ldots,E_n;U\right)<\left(E_1',E_2',\ldots,E_n';U'\right).$$
This contradicts the maximality of $\left(E_1,E_2,\ldots,E_n;U\right)$. Therefore, $\sum_{i=1}^nE_i=1$.
Finally, we observe that
\begin{align*}
\left\|\alpha_1(E_j)-E_{j+1}\right\|_2^2&
\le\left\|\alpha_1(E_j)-E_{j+1}\right\|\left\|\alpha_1(E_j)-E_{j+1}\right\|_1\\
&\le
 2\left\|\alpha_1(E_j)-E_{j+1}\right\|_1\\
 &=2\left\|\alpha_1(E_j)-U\alpha_1(E_{j})U^*\right\|_1\\
 &\le4\|U-1\|_1=4\varepsilon,~j=1,2,\ldots,n.   
\end{align*}
This completes the proof for the case in which $\alpha_1$ acts trivially on 
the center $ Z( N)$. 

 If $\alpha_1|_{ Z( N)}$ is essentially free, then we can use the Rokhlin theorem for non-singular actions
(\cite[Theorem~B.2.4]{jarrett2018non}) to obtain the required projections 
 $\left\{E_1,E_2,\ldots,E_n\right\}$. It remains to consider the case in which 
 $\alpha_1$ is periodic with period $p\ge 1$.  Let $P$ be a projection in $ Z( N)$ be such that 
 $\left\{\alpha_1^j(P): 0\le j\le p-1\right\}$ is a partition of unity. It then 
 follows that $\alpha_1^p|_{ NP}$ is free and leaves the center $ Z\left( 
 NP\right)$ fixed. Therefore, using the arguments from before, we can find a 
 partition $\left\{F_1, F_2,\ldots, F_n\right\}$ of unity in $ NP$ such that
\[\left\|\alpha_1^p|_{ 
NP}(F_i)-F_{i+1}\right\|_2<\frac{\varepsilon}{p},~i=1,2,\ldots,n\text{
 and } F_{n+1}=F_1.\]
Let 
\[F_0=F_n\text{ and }G_{ip+j}=\alpha_1^j\left(F_i\right),~0\le j<p, 0\le i<n.\]
Observe that $\left\{G_k: 0\le k<np\right\}$ is a partition of unity 
in $ N$. Moreover, 
$\left\|\alpha_1(G_k)-G_{k+1}\right\|_2<\frac{\varepsilon}{p}$. Let
\[E_i=\sum_{k=0}^{p-1}G_{kn+i},~1\le i\le n,\text{ with the convention that } G_{pn}=G_0.\]
It then follows that $\left\{E_i: 1\le i\le n\right\}$ is a partition of unity 
in $ N$ and $\left\|\alpha_1(E_i)-E_{i+1}\right\|_2\le \varepsilon$ 
for 
each $i=1,2,\ldots,n$. As usual, $E_{n+1}=E_1$.
This completes the proof. 

\end{proof} 
\end{prop}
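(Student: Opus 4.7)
The plan is to adapt Connes' original argument in \cite[Theorem~1.2.5]{connes1975outer} by replacing the ergodic-theoretic Rokhlin lemma (which requires trace invariance) with the nonsingular Rokhlin lemma for essentially free measure-class-preserving amenable actions, applied through the center. First I would reduce to three cases by writing $ N$ as a direct sum of $\alpha$-invariant pieces: a summand $ N_0$ on which $\alpha_1|_{ Z( N_0)}$ is essentially free, a summand $ N_1$ on which $\alpha_1|_{ Z( N_1)}$ is trivial, and for each $k\ge 2$ a summand $ N_k$ on which $\alpha_1|_{ Z( N_k)}$ is of exact period $k$. It suffices to prove the statement on each summand; freeness of $\alpha$ on all of $ N$ is preserved on each piece.

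The main case is when $\alpha_1$ acts trivially on $ Z( N)$. Here I would mimic Connes' maximality argument. Consider the set $\mathcal{E}$ of tuples $(E_1,\ldots,E_n;U)$ with $E_i$ mutually orthogonal equivalent projections, $U\in\mathcal{U}( N)$, $U\alpha_1(E_i)U^*=E_{i+1}$ cyclically, and $\|U-1\|_1\le\varepsilon\tau(\sum_iE_i)$, ordered so that $(E_i;U)\le (E_i';U')$ means $E_i\le E_i'$ and $\|U-U'\|_1\le \varepsilon\tau(\sum_i E_i'-E_i)$. Nonemptiness of $\mathcal{E}$ follows from the non-invariant-trace version of \cite[Lemma~1.2.7]{connes1975outer} as stated in \cite[Ch.~XVII, Lemma~1.8]{TakesakiIII}. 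A cofinal sequence in a chain gives a Cauchy sequence of unitaries in $\|\cdot\|_1$ whose limit, together with the monotone limit of the projections, lies in $\mathcal{E}$, so Zorn's lemma yields a maximal element. I would then show $\sum_i E_i=1$ by contradiction: if $P=1-\sum_i E_i\ne 0$, then $\widetilde\alpha:=\Ad(U)\circ\alpha_1$ restricts to a free action on $P NP$, and applying the existing corner-version of the result to the tracial von Neumann algebra $(P NP,\tau(P)^{-1}\tau)$ produces projections $P_1,\ldots,P_n$ and a unitary $V\in\mathcal{U}(P NP)$ satisfying the cyclic relation with a controlled $\|V-P\|_1$ bound; setting $E_i':=E_i+P_i$ and $U':=(V+(1-P))U$ gives a strictly larger element of $\mathcal{E}$, contradicting maximality. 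Finally, I would convert the $\|\cdot\|_1$-estimate $\|U-1\|_1\le\varepsilon$ to the desired $\|\cdot\|_\tau=\|\cdot\|_2$-estimate $\|\alpha_1(E_j)-E_{j+1}\|_\tau^2\le 4\varepsilon$ using $\|x\|_2^2\le\|x\|\cdot\|x\|_1$ and the identity $\alpha_1(E_j)-E_{j+1}=\alpha_1(E_j)-U\alpha_1(E_j)U^*$ (after running the argument with $\varepsilon$ replaced by $\varepsilon/4$).

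For the summand where $\alpha_1$ acts on $ Z( N)$ essentially freely, I would invoke the nonsingular Rokhlin theorem (\cite[Theorem~B.2.4]{jarrett2018non}) on the abelian von Neumann algebra $ Z( N)$ to obtain a partition of unity $\{E_j\}$ of central projections with $\|\alpha_1(E_j)-E_{j+1}\|_2$ small; these automatically commute with everything and the towers are inherited by $ N$. For the summand with $\alpha_1|_{ Z( N)}$ of exact period $p$, I would pick a central projection $P\in Z( N)$ with $\{\alpha_1^j(P):0\le j<p\}$ a partition of unity, observe that $\alpha_1^p$ restricted to $ NP$ is free and acts trivially on $ Z( NP)$, apply the previously established case to produce a partition $\{F_1,\ldots,F_n\}$ of $P$ with $\|\alpha_1^p(F_i)-F_{i+1}\|_2<\varepsilon/p$, then reassemble: set $G_{ip+j}:=\alpha_1^j(F_i)$ to get a partition of unity in $ N$ of length $np$ with $\|\alpha_1(G_k)-G_{k+1}\|_2<\varepsilon/p$, and finally group these into the desired $n$ blocks $E_i:=\sum_{k=0}^{p-1}G_{kn+i}$.

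The main obstacle I anticipate is ensuring the Zorn step in the trivial-center case goes through without an invariant trace, specifically that chains have upper bounds and the corner extension step uses only the non-invariant-trace version of the key intermediate lemma; once the quoted version of \cite[Ch.~XVII, Lemma~1.8]{TakesakiIII} is available, the rest is a bookkeeping exercise in $\|\cdot\|_1$-estimates. The periodic and essentially free cases are then routine applications of the trivial-center case and of the nonsingular Rokhlin theorem, respectively.
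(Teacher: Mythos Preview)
Your proposal is correct and follows essentially the same route as the paper's proof: the same decomposition into the essentially-free-center, trivial-center, and periodic-center summands; the same Zorn's-lemma maximality argument on the collection $\mathcal{E}$ in the trivial-center case (with the same corner-extension contradiction and the same $\|\cdot\|_1$-to-$\|\cdot\|_2$ conversion); the nonsingular Rokhlin theorem on $Z(N)$ in the essentially-free case; and the same $G_{ip+j}=\alpha_1^j(F_i)$ reassembly in the periodic case. Even the key external inputs (\cite[Lemma~1.2.7]{connes1975outer}, \cite[Ch.~XVII, Lemma~1.8]{TakesakiIII}, \cite[Theorem~B.2.4]{jarrett2018non}) match.
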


 \thref{prop:connes} holds for finite von Neumann algebras, which is not the 
 case here. We bypass the finiteness assumption by working with the 
 central sequence von Neumann algebras $ M_{\omega}$. This, in turn, gives us 
 projections in $ M_{\omega}$ which are Rokhlin towers. To obtain the 
 desired partition at the level of the von Neumann algebra $ M$ (which is not 
 necessarily finite), we use an index selection trick along with 
 \thref{lem:connes}.
\begin{lemma}
\thlabel{relConnesrelative}
Let $ N\subset M$ be an inclusion of von Neumann algebras with separable 
preduals such that $1_{ N}=1_{ M}$. Let $B\subset M$ be a weakly dense C*-subalgebra. Suppose that $ N'\cap N^{\omega}\subseteq B'\cap N^{\omega}$ 
and suppose $\varphi$ is a faithful normal state on $ M$. Let 
$\alpha:\mathbb{Z}\to \aut( M)$ be an action such that 
$\alpha( N)\subseteq  N$ and such that
$\alpha_\omega:\mathbb{Z}\to \aut( N_{\omega})$ is free. Then, given 
$N\in \mathbb{N}$, there exists a sequence 
$\{e_{1}^{ \nu } ,e_{2}^{ \nu } ,\hdots,e_{N}^{ \nu } \}_{ \nu } $ 
of partitions of unity in $\cN$ such that
\begin{equation}
\label{equ:connes_equivariancevna}   
\left\| \alpha_{1}(e_j^{ \nu } )-e_{j+1}^{ \nu }  \right\|_\varphi^\# \underset{{ 
\nu } \to \omega}{\longrightarrow} 0 \,\,\forall j=1,2,\ldots,N. 
\end{equation}
\begin{equation}
\label{equ:connes_commutevna} 
  \left\| e_j^{ \nu } b-be_j^{ \nu } \right\|_\varphi^\# \underset{{ \nu } \to 
  \omega}{\longrightarrow} 0 \,\, \forall b\in \cB \text{ and }j\in\{1,2,\hdots 
  ,N\}.
\end{equation} 
\begin{equation}
\label{equ:connes_commute2vna} 
  \left\| e_j^{ \nu } be_j^{ \nu } -be_j^{ \nu } \right\|_\varphi^\# \underset{{ 
  \nu } \to \omega}{\longrightarrow} 0 \,\, \forall b\in \cB\text{ and 
  }j\in\{1,2,\hdots ,N\}.
\end{equation}
\end{lemma}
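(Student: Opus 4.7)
The plan is to apply \thref{prop:connes} inside the asymptotic centralizer $N_\omega$, which is a finite von Neumann algebra with faithful normal tracial state $\varphi_\omega$ on which the induced action $\alpha_\omega$ is free by hypothesis. This supplies an approximate Rokhlin partition at the level of $N_\omega$, which we lift to $N$ via \thref{lem:connes} and then splice into a single sequence by a diagonal argument.

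\textbf{Implementation.} For each $k \in \bN$, \thref{prop:connes} produces a partition of unity $\{E_1^{(k)}, \ldots, E_N^{(k)}\}$ in $N_\omega$ with $\|\alpha_\omega(E_j^{(k)}) - E_{j+1}^{(k)}\|_{\varphi_\omega} < 1/k$ (indices mod $N$). Using \thref{lem:connes}, lift to a sequence $(\{e_1^{(k),\nu}, \ldots, e_N^{(k),\nu}\})_\nu$ of partitions of unity in $N$ representing $\{E_j^{(k)}\}$. For each fixed $k$, three limits hold as $\nu \to \omega$. First, since the difference of two projections is self-adjoint, $\|\alpha_1(e_j^{(k),\nu}) - e_{j+1}^{(k),\nu}\|_\varphi^\# = \sqrt{2}\,\|\alpha_1(e_j^{(k),\nu}) - e_{j+1}^{(k),\nu}\|_\varphi$, which tends to $\sqrt{2}\,\|\alpha_\omega(E_j^{(k)}) - E_{j+1}^{(k)}\|_{\varphi_\omega} < \sqrt{2}/k$. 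Second, because $E_j^{(k)} \in N_\omega \subseteq N' \cap N^\omega \subseteq B' \cap N^\omega$, where the final inclusion is our hypothesis, the very definition of $B' \cap N^\omega$ gives $\|e_j^{(k),\nu} b - b e_j^{(k),\nu}\|_\varphi^\# \to 0$ for every $b \in B$. Third, from the identity $\|e_j b e_j - b e_j\|_\varphi = \|(1-e_j)[e_j, b]\|_\varphi \leq \|[e_j, b]\|_\varphi$, together with the parallel estimate for the adjoint half of $\|\cdot\|_\varphi^\#$ (which uses that $(e_j^{(k),\nu})_\nu$ represents a centralizer element, so right multiplication by it is asymptotically compatible with the modular structure of $\varphi$), we deduce $\|e_j^{(k),\nu} b e_j^{(k),\nu} - b e_j^{(k),\nu}\|_\varphi^\# \to 0$.

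\textbf{Diagonalization and main difficulty.} Fix a countable $\|\cdot\|_\varphi^\#$-dense subset $(b_m)_m$ of the unit ball of $B$, which exists because $M_*$ is separable. For each $k$, choose $\nu_k$ in a neighborhood of $\omega$ so that, for all $1 \leq i, j \leq k$, the three quantities displayed above (evaluated with $b = b_i$) are bounded by $2/k$. Re-indexing, $e_j^{\nu} := e_j^{(\nu),\nu_\nu}$ yields the required sequence of partitions of unity in $N$; the extension of the commutation conditions from the dense set $(b_m)$ to all of $B$ uses only continuity of multiplication in $\|\cdot\|_\varphi^\#$ by elements of operator norm at most one. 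The main novelty over Connes' original argument is the promotion of asymptotic commutation from $N$ to the larger C*-subalgebra $B$, for which the hypothesis $N' \cap N^\omega \subseteq B' \cap N^\omega$ (a relative analog of \thref{prop_ando_kirchberg}) is indispensable.
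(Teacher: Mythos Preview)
Your overall strategy matches the paper's: apply \thref{prop:connes} inside $N_\omega$, lift via \thref{lem:connes}, and use the hypothesis $N'\cap N^\omega\subseteq B'\cap N^\omega$ for the commutation with $B$. The paper differs in one organizational point that turns out to matter. Rather than lifting each approximate partition $\{E_j^{(k)}\}$ and diagonalizing afterwards, it first applies Ocneanu's index selection trick (\cite[Section~5.5]{ocneanu2006actions}) to pass from the sequence $(E_j^{(k)})_k$ to a \emph{single} partition $\{E_j\}$ in $N_\omega$ satisfying $\alpha_\omega(E_j)=E_{j+1}$ exactly, and only then lifts once. Because this $E_j$ already lies in $N_\omega\subseteq N'\cap N^\omega\subseteq B'\cap N^\omega$, the very definition of $B'\cap N^\omega$ gives $\|[e_j^\nu,b]\|_\varphi^\#\to 0$ for \emph{every} $b\in B$, with no density argument in $B$ required.

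Your diagonal sequence, by contrast, need not represent an element of $N_\omega$, so you must extend commutation from the countable set $\{b_m\}$ to all of $B$ by density. Here there is a genuine gap: the asserted $\|\cdot\|_\varphi^\#$-continuity of one-sided multiplication by contractions is false when $\varphi$ is non-tracial. For instance, in $B(\ell^2)$ with $\varphi(x)=\langle x\delta_0,\delta_0\rangle$, the matrix units $r_n=e_{1,n}$ ($n\geq 2$) satisfy $\|r_n\|_\varphi^\#=0$, yet for the rank-one projection $e$ onto $\mathrm{span}\{\delta_0+\delta_1\}$ one has $\|er_n\|_\varphi^\#=1/2$ for all $n$. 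Hence $\|\cdot\|_\varphi^\#$-density of $\{b_m\}$ does not suffice, and the lemma does not assume $B$ is norm-separable. The cleanest repair is to add to your diagonalization the requirement that the resulting sequence be $\omega$-central in $N$ (impose $\|[e_j^{(k),\nu_k},\psi_i]\|<1/k$ for $i\leq k$, where $\{\psi_i\}$ is norm-dense in the separable predual $N_*$); the diagonal sequence then represents an element of $N_\omega\subseteq B'\cap N^\omega$, and commutation with all of $B$ follows directly from the definition---which is precisely what the index selection trick achieves. (A side remark: your appeal to ``the modular structure'' for \eqref{equ:connes_commute2vna} is unnecessary; the adjoint half is simply $\|e_j b^* e_j - e_j b^*\|_\varphi=\|e_j[e_j,b^*]\|_\varphi\le\|[e_j,b^*]\|_\varphi$, which tends to $0$ by \eqref{equ:connes_commutevna} applied to $b^*$.)
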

\begin{proof}
Applying \thref{prop:connes} to $\cN_\omega$, for $N\in\bN$, for any $n \in 
\bN$, we obtain a partition of unity 
$\{E_j^n\}_{j=1}^N \subseteq N_\omega\subseteq N'\cap N^\omega\subseteq B'\cap N^\omega$ such that 
$\left\|\alpha_1(E_j^n)-E_{j+1}^n\right\|_{\varphi_\omega}<\frac{1}{n}$ for 
$j=1,2,\hdots,N$, with $E_{N+1}^n=E_1^n$. Applying an index selection trick 
(e.g.~\cite[Section~5.5]{ocneanu2006actions}) to the elements 
$\{E_{1}^n,E_{2}^n,\hdots, E_{N}^n\}$ gives us a  partition 
of unity $\{E_j\}_{j=1}^N \subseteq \cN_\omega\subseteq N'\cap N^\omega$ such that
\[
\alpha_1(E_j)=E_{j+1} \text{ for } j={1,2,\hdots,N},\text{ with}\, E_{N+1}=E_1 \, .
\]
Finally, as a consequence of \thref{lem:connes}, and the fact that 
$\left\|.\right\|_\varphi^\#$ defines the *-strong operator topology on bounded 
subsets of $\cM$, we obtain a sequence of partition of unity $e_{1}^{ \nu } 
,e_{2}^{ \nu } ,\hdots,e_{N}^{ \nu } $ in $\cN$, where $\{e_{j}^{ \nu } \}_{ \nu 
} $ is a representative sequence for $E_{j}$ in $N'\cap N^\omega$, such that for all 
$j\in\{1,2,\hdots,N\}$ we have
\[
\alpha_1(e_j^{ \nu } )-e_{j+1}^{ \nu }  \underset{{ \nu } 
\rightarrow\omega}{\longrightarrow} 0 \text{ *-strongly} 
\] 
with $e_{N+1}^{ \nu } =e_1^{ \nu } $ for all ${ \nu } $. Equivalently,
\[
\left\| \alpha_1(e_j^{ \nu } )-e_{j+1}^{ \nu } \right\|_\varphi^\# \underset{{ \nu 
} 
	\rightarrow\omega}{\longrightarrow} 0 \, .
\]
This establishes equation~\eqref{equ:connes_equivariancevna}.
Equation~\eqref{equ:connes_commutevna} follows from the condition $N'\cap N^\omega\subseteq B'\cap N^\omega$. 
Consequently, using equation~\eqref{equ:connes_commutevna}, we see that
\begin{align*}
\left\|e_j^{ \nu } be_j^{ \nu } -e_j^{ \nu } b\right\|_\varphi 
&\leq\left\|be_j^{ \nu } -e_j^{ \nu } b\right\|_\varphi\underset{{ \nu } 
	\rightarrow\omega}{\longrightarrow} 0,
\end{align*}
and thus
\begin{align*}
\left\|e_j^{ \nu } be_j^{ \nu } -be_j^{ \nu } \right\|_\varphi &\leq 
\left\|e_j^{ \nu } be_j^{ \nu } -e_j^{ \nu } b\right\|_\varphi + \left\|e_j^{ 
\nu } b-be_j^{ \nu } \right\|_\varphi \\ & \leq 2\left\|be_j^{ \nu } -e_j^{ \nu 
} b\right\|_\varphi \underset{{ \nu } 
\rightarrow\omega}{\longrightarrow} 0. 
\end{align*}
This establishes equation~\eqref{equ:connes_commute2vna}, thereby completing the proof.
\end{proof}

\begin{thm}\thlabel{thm:relativizedconnes}
	Let $ A \subseteq  B $ be separable $C^*$-algebras such that $A$ contains an approximate identity for $B$. 
	 Let $\omega$ be a free ultrafilter on 
	$\mathbb{N}$. Let 
	$\alpha:\bZ\to\aut(\cB)$ be an action which leaves $A$ invariant. Suppose for every central $\alpha^{**}$-invariant projection $P$ in $B^{**}$ such that $PB^{**}P $ is separable,  we have 
	\[
	(PA 
	P)'\cap (PA^{**}P)^\omega= (PB P)'\cap (PA^{**}P)^\omega
	\]
	 and 
	$\alpha^{**}_\omega:\bZ\to\aut((PA^{**}P)_\omega)$ is free. Then, for any 
	$N\in\bN$ and for any  
	finite set of elements 
 	$S \subseteq \cB$,  there exists a net of mutually orthogonal, 
	contractive,  positive elements 
	$\{p_{1}^\beta,p_{2}^\beta,\hdots,p_{N}^\beta\}_{\beta}$ in $ A $ with 
	$p_{N+1}^\beta=p_1^\beta$, such that for all $j\in 
	\{1,2,\hdots, N\}$ and for all $b \in S$ the following hold:  
	\begin{align}
		\alpha_1(p_j^\beta)-p_{j+1}^\beta & \to 0 \text{ *-strongly }.\\
		b p_j^\beta-p_j^\beta b  & \to 0\text{ *-strongly}.\\
		p_j^\beta b p_j^\beta-b  p_j^\beta &  \to 0\text{ *-strongly}.\\
		\sum_{l=1}^N p_j^\beta-1 & \to 0\text{ *-strongly }.\\
		(p_j^\beta)^2-p_j^\beta & \to 0\text{ *-strongly }.
	\end{align} 
\end{thm}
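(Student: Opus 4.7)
The plan is to reduce to the von Neumann-algebraic Rokhlin lemma \thref{relConnesrelative} by working inside a suitable $\alpha^{**}$-invariant separable corner of $B^{**}$, and then to descend the partitions of unity from $PA^{**}P$ to mutually orthogonal positive contractions in $A$ using the Kaplansky density theorem together with a single-variable functional-calculus trick.

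Given a finite family $\Phi$ of normal states on $B^{**}$ and $\varepsilon > 0$ (the parameters of the eventual net), I first produce a central $\alpha^{**}$-invariant projection $P \in B^{**}$ with $PB^{**}P$ of separable predual as follows. Fix a faithful representation of the reduced crossed product $B \rtimes_{\alpha,r} \bZ$ on a separable Hilbert space $H$ that contains the GNS representation of each $\phi \in \Phi$, and restrict it to a representation $\pi \colon B \to B(H)$. Since the action is spatially implemented on $\pi(B)''$, the central support $P$ of the normal extension $\pi^{**}$ is $\alpha^{**}$-invariant, $PB^{**}P \cong \pi(B)''$ has separable predual, and $\phi(P) = 1$ for every $\phi \in \Phi$. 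The hypotheses of the theorem specialize to this corner, so \thref{relConnesrelative} applies to the inclusion $PA^{**}P \subseteq PB^{**}P$ with weakly dense $C^*$-subalgebra $\pi(B)$ and any faithful normal state $\varphi$ on $PB^{**}P$, yielding a sequence of partitions of unity $\{E_1^\nu, \ldots, E_N^\nu\}_\nu$ in $PA^{**}P$ satisfying (\ref{equ:connes_equivariancevna})--(\ref{equ:connes_commute2vna}).

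For the descent to $A$, consolidate the tower into the single self-adjoint element $y^\nu := \sum_{j=1}^N j \cdot E_j^\nu \in A^{**}$, whose spectrum lies in $\{0, 1, \ldots, N\}$; by the Kaplansky density theorem there is a net of self-adjoints $a_\lambda \in A$ with $\|a_\lambda\| \leq N$ converging *-strongly to $y^\nu$. Pick continuous bump functions $\chi_1, \ldots, \chi_N \colon \bR \to [0,1]$ with pairwise disjoint supports inside $(0,\infty)$ and $\chi_j(k) = \delta_{jk}$ on $\{0,1,\ldots,N\}$; then $\chi_j(y^\nu) = E_j^\nu$, and $p_j^{\nu,\lambda} := \chi_j(a_\lambda) \in A$ is a positive contraction, mutually orthogonal across $j$ by the disjoint supports of the $\chi_j$, and $p_j^{\nu,\lambda} \to E_j^\nu$ *-strongly by the *-strong continuity of continuous functional calculus on norm-bounded sets. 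The five asymptotic identities then transfer from the $E_j^\nu$: for condition (1), $\alpha(p_j^{\nu,\lambda}) = \chi_j(\alpha(a_\lambda))$ converges *-strongly to $\chi_j(\alpha^{**}(y^\nu))$, which is close to $E_{j+1}^\nu = \chi_{j+1}(y^\nu)$ by the lemma; conditions (2) and (3) follow from the commutator estimates of the lemma combined with left/right continuity of multiplication in the *-strong topology; condition (5) holds because each $E_j^\nu$ is exactly a projection; and $\sum_j p_j^{\nu,\lambda} \to P$ *-strongly.

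Diagonalising over the directed set $\beta = (\Phi, \varepsilon)$ (ordered by inclusion and reverse inequality), and choosing $\nu = \nu(\beta)$ and $\lambda = \lambda(\beta)$ sufficiently large, yields the required net satisfying all five conditions to within $\varepsilon$ in each seminorm $\|\cdot\|_\phi^\#$ for $\phi \in \Phi$. The condition $\sum_j p_j^\beta \to 1$ *-strongly is obtained because enlarging $\Phi$ forces $P_\beta \to 1$ *-strongly (indeed $\phi(1-P_\beta) = 0$ whenever $\phi \in \Phi$) and $\sum_j p_j^\beta$ is *-strongly close to $P_\beta$. The main technical obstacle is achieving \emph{exact} mutual orthogonality in $A$ simultaneously with the five asymptotic Rokhlin-type identities; the functional-calculus-on-one-element trick sidesteps it by building orthogonality into the disjoint supports of the bump functions, rather than having to enforce it via an iterative cascade of Kaplansky approximations.
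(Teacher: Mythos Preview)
Your argument is correct and follows the same two-step skeleton as the paper: reduce to a separable $\alpha^{**}$-invariant corner of $B^{**}$, apply \thref{relConnesrelative} there, and then push the resulting partitions of unity down into $A$ while preserving exact orthogonality.

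The one genuine methodological difference is in the descent step. The paper packages the partition of unity $e_1,\ldots,e_N$ as an order zero map $\Theta^\nu \colon C(\bZ_N) \to A^{**}$ and then invokes the Kaplansky-density-type lemma for order zero maps from \cite[Lemma~1.1]{hirshberg2012decomposable} to obtain order zero approximants $\Psi^\mu_\nu \colon C(\bZ_N) \to A$. Your route---consolidating the tower into the single self-adjoint $y^\nu = \sum_j j\,E_j^\nu$, Kaplansky-approximating that by $a_\lambda \in A_{\mathrm{sa}}$, and recovering the pieces via bump functions $\chi_j$ with disjoint supports---is precisely how one would prove that order-zero Kaplansky lemma in the special case of commutative domain $C(\bZ_N)$, so the two arguments are essentially the same under the hood. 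Your version has the advantage of being self-contained (no external citation needed), while the paper's formulation in terms of order zero maps makes the equivariance bookkeeping slightly cleaner and generalizes more directly to the amenable-group version in \thref{thm:relativizedocneanu}. The construction of the separable corner $P$ also differs cosmetically (you use the central support of a representation containing all the GNS pieces, the paper takes the join of the $\alpha^{**}$-orbit of the central support of a single state), but both are standard and equivalent for the purpose at hand.
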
 

\begin{proof}

We prove this in two steps.\\
Step 1: We claim that for any $N\in\bN$ and any finite set 
$S \subseteq \cB$ and for any $*$-strong neighborhood $V$ of $0$ 
in $B^{**}$  there 
exist mutually orthogonal projections $e_{1} 
,e_{2},\hdots 
e_{N}$ in $ A ^{**}$ such that for any $j\in\{1,2,\hdots,N\}$ and for any
$b \in S$ the following hold:
\begin{enumerate}[label=(\Alph*)]
    \item\label{equ:connes_equivariancecstarstar} \[\alpha_1(e_j)-e_{j+1} \in V 
    \text{ with } e_{N+1} =e_1 
. \]
    \item \label{equ:connes_commutecstarstar} 
  \[e_j b-be_j \in V\]
  \item \[\label{equ:connes_commute2cstarstar} 
   e_j be_j-be_j \in V \]
   \item \[\label{equ:connes_pou}
   \sum_{j=1}^N e_j -1\in V 
   \]
\end{enumerate} 

\thref{relConnesrelative} does not directly apply, as $A^{**}$ and 
$B^{**}$ typically does not have separable preduals. This is corrected as 
follows. Pick a normal state $\varphi$ and $\varepsilon>0$ be such that $\{b 
\in B^{**} \mid \|b\|_{\varphi}^{\#} < \varepsilon \} \subseteq V$. 

 Let $P$ be the smallest central $\alpha^{**}$-invariant 
projection which dominates the central support of $\varphi$, that is, $P$ can 
be obtained by taking the join of all iterates 
of 
central support of $\varphi$ under $\alpha$. We may assume without loss of generality that 
$\varphi|_{P\cA^{**}P}$ is a 
faithful normal state (otherwise replace it by $\frac{1}{2}\varphi + \sum_{n \in \bZ \smallsetminus\{0\}} \frac{1}{5^{|n|}} \varphi \circ \alpha^n$ and replace $\varepsilon$ by $\varepsilon/2$). Notice that
$P\cA^{**}P$ and $P\cB^{**}P$ have separable preduals, since $\cA$ and $\cB$ 
both are separable $C^*$-algebras. Using the fact that 
$\alpha^{**}_\omega:\bZ\to\aut((P\cA^{**}P)_\omega)$ is   free and $PBP$ is weakly dense in $PB^{**}P$, by 
\thref{relConnesrelative}, for $N\in\bN$, we get a partition of unity 
$e_1,e_2,\hdots,e_N\in P\cA^{**}P$ such that 
\begin{equation}\label{equ:connes_one}
   \left\| \alpha_{1}(e_j)-e_{j+1} \right\|_\varphi^\# < \varepsilon 
   \,\,\forall j=1,2,\ldots,N. 
\end{equation}
\begin{equation}\label{equ:connes_two}
  \left\| e_j b-be_j \right\|_\varphi^\# < \varepsilon \,\, 
  \forall b\in S \text{ and }j\in\{1,2,\hdots ,N\}.
\end{equation} 
\begin{equation}\label{equ:connes_three}
 \left\| e_j be_j -be_j \right\|_\varphi^\# < \varepsilon  \,\, 
 \forall b\in S \text{ and }j\in\{1,2,\hdots ,N\}.
\end{equation}
\begin{equation}\label{equ:connes_four}
\left\|P-\sum_{j=1}^N e_j \right\|_\varphi^\#=0 .
\end{equation}
Therefore, those elements satisfy conditions 
\ref{equ:connes_equivariancecstarstar},\ref{equ:connes_commutecstarstar},
	\ref{equ:connes_commute2cstarstar} and \ref{equ:connes_pou} above.

Step 2: We now show that we can find a net of mutually orthogonal 
positive elements in $ 
A $ which satisfies the conclusion of \thref{thm:relativizedconnes}.

Let $\varphi$ be a normal state on $B^{**}$ and let $\varepsilon>0$. We need to 
show that we can find 
mutually orthogonal positive elements $p_1,p_2,\hdots,p_N$ in $A$, with $p_{N+1} = 
p_1$ such that for $j=1,2,\hdots,N$ and any $b \in S$ we have
	\begin{align}
	\| \alpha_1(p_j)-p_{j+1} \|_{\varphi}^{\#} < \varepsilon ,\\
	\| b p_j-p_j b  \|_{\varphi}^{\#} < \varepsilon ,\\
	\| p_j b p_j-b  p_j  \|_{\varphi}^{\#} < \varepsilon ,\\
	\| 1 - \sum_{l=1}^N p_j  \|_{\varphi}^{\#} < \varepsilon , \\
	\| (p_j)^2-p_j \|_{\varphi}^{\#} < \varepsilon .
\end{align}

Denote by $\sigma:\bZ\to C(\bZ_N)$ the translation action, that is, 
$\sigma_l(f)(s)=f(l-s)$ for $s,l\in \bZ$. For $j\in\{1,2,\hdots,N\}$ let 
$f_j\in C(\bZ_N)$ be the projection in $C(\bZ_N)$ such that $f_j(j)=1$ and zero 
otherwise, with $f_{N+1}=f_1$.
Using Step 1, we can find a net of equivariant order zero maps 
\[
\Theta^{ \nu } :(C(\bZ_N),\sigma,\bZ)\to ( A ^{**},\alpha^{**},\bZ)
\]
which satisfy
\begin{align*}
\alpha^{**}_1(\Theta^{ \nu } (f_j))&-\Theta^{ \nu } (\sigma_1(f_j))\to 0\text{ 
*-strongly }\\
\Theta^{ \nu } (f_i)b&-b\Theta^{ \nu } (f_i)\to 0\text{ *-strongly, }\forall 
b\in  S 
\end{align*}
Fix $\nu$ such that for all $b \in S$ and for $j=1,2,\ldots,N$ we have
	\begin{align}
	\| \alpha_1(\Theta^{ \nu } (f_j) )-\Theta^{ \nu } (f_{j+1}) 
	\|_{\varphi}^{\#} < \varepsilon ,\\
	\| b \Theta^{ \nu } (f_j ) -\Theta^{ \nu } (f_j ) b  \|_{\varphi}^{\#} < 
	\varepsilon ,\\
	\| \Theta^{ \nu } (f_j ) b \Theta^{ \nu } (f_j ) -b  \Theta^{ \nu } (f_j)  
	\|_{\varphi}^{\#} < \varepsilon ,\\
	\| 1 - \sum_{l=1}^N \Theta^{ \nu } (f_j )  \|_{\varphi}^{\#} < \varepsilon 
	. 
\end{align}

 We can now use the Kaplansky density-type lemma from \cite[Lemma 
1.1]{hirshberg2012decomposable},  to get a net 
$(\Psi^\mu_\nu)_\mu$ of order zero maps $C(\bZ_N)\to  A $ such that 
$\Psi^\mu_\nu(f)\to\Theta^{ \nu } (f)$ *-strongly for all $f\in 
C(\bZ_N)$. Define 
\[\Psi^\mu_\nu(f_j)=p_{j,\nu}^\mu \, \text{ for 
}\,j\in{1,2,\hdots,N}\]
Then $p_{j,\nu}^\mu$ are positive elements in $ A $ and $p_{i,{ \nu } 
}^\mu 
p_{j,\nu}^\mu=0$ whenever $i\neq j$. Thus, we get a net 
$(p_{1}^\eta,p_{2}^\eta,\ldots p_{N}^\eta)_{\eta }$ of mutually 
orthogonal positive elements in $ A $. It now readily follows that there exists 
$\eta_0$ such that for any $\eta \geq \eta_0$, the elements 
$p_{1}^\eta,p_{2}^\eta,\ldots p_{N}^\eta$ satisfy the required conditions above.
\end{proof}
Ocneanu~\cite{ocneanu2006actions} proved a generalized Rokhlin-type theorem for 
centrally free actions of amenable groups. We show that if the group in 
question admits large monotiles, one can somewhat strengthen the conclusions of 
\cite[Theorem~6.1]{ocneanu2006actions}. In the paper, we only need the 
statement for finite cyclic groups, but the proof of the general statement is 
similar. The statement appears below in \thref{thm:relativizedocneanu}. 
\begin{prop}\thlabel{prtunityinvna}
Let $\Gamma$ be a countable amenable group.
Let $\cN\subset \cM$ be von Neumann algebras with separable preduals such 
that $1_\cN=1_\cM$. Let $B\subset M$ be weakly dense C*-subalgebra. Let $\alpha:\Gamma\to\aut(\cM)$ be an action such 
that $\alpha(\cN)\subseteq \cN$ and $\alpha$ restricted on $\cN$ is centrally 
free. Let $N'\cap N^\omega\subseteq B'\cap N^\omega$ and $\varphi$ be a faithful normal 
state on $\cM$ such that $\alpha|_{Z(\cN)}$ leaves $\varphi|_{Z(\cN)}$ 
invariant. Let $T$ be a monotile for $\Gamma$. 
Then there exists a sequence $\{e_{t}^{\nu}\}_{t \in T, \nu \in \bN}$ of 
partitions of unity in $\cN$ such that
\begin{equation}
\label{equ:equivariancevna}   
\left\| \alpha_{ts^{-1}}(e_s^{ \nu } )-e_t^{ \nu }  \right\|_\varphi^\#
\underset{{ \nu } \to \omega}{\longrightarrow} 0 \,\,\forall s,t\in T,
\end{equation}
\begin{equation}
\label{equ:commutevna} 
  \left\| e_t^{ \nu } b-be_t^{ \nu } \right\|_\varphi^\#\underset{{ \nu } \to 	
  \omega}{\longrightarrow} 0 \,\, 
  \forall b\in \cB \; \forall t\in T ,
\end{equation} 
\begin{equation}
\label{equ:commute2vna} 
  \left\| e_t^{ \nu } be_t^{ \nu } -be_t^{ \nu } \right\|_\varphi^\#\underset{{ 
  \nu } \to 	
  	\omega}{\longrightarrow} 0  \,\, 
  \forall b\in \cB \; \forall t \in T .
\end{equation}
\end{prop}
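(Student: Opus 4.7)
My plan is to mirror the proof of \thref{relConnesrelative}, replacing Connes' single-automorphism Rokhlin theorem with Ocneanu's Rokhlin theorem for centrally free actions of amenable groups. The key observation is that $\cN_\omega$ is a finite von Neumann algebra carrying the faithful normal trace $\varphi_\omega$, which is $\alpha_\omega$-invariant because $\alpha|_{Z(\cN)}$ preserves $\varphi|_{Z(\cN)}$; and by \thref{lem:ocneanu_stronglyouter}, the central freeness assumption implies that $\alpha_\omega$ is strongly outer on $\cN_\omega$. This is precisely the setting in which Ocneanu's theorem produces Rokhlin-type partitions of unity indexed by paving systems.

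First, invoke \thref{prop:monotileableepsilon} to see that the single monotile $T$ is an $\varepsilon$-paving system for every $\varepsilon > 0$. Applying Ocneanu's Rokhlin theorem (\cite[Chapter 6]{ocneanu2006actions}) to $\alpha_\omega$ with this paving set, one obtains for each $n \in \bN$ a partition of unity $\{E_t^n\}_{t \in T} \subseteq \cN_\omega$ with
\[
\left\| \alpha_{ts^{-1}}(E_s^n) - E_t^n \right\|_{\varphi_\omega} < \frac{1}{n} \qquad \text{for all } s,t \in T.
\]
Next, I apply an index-selection argument (as in \cite[Section 5.5]{ocneanu2006actions}) to the sequence of approximate partitions $\{E_t^n\}_n$ to extract a genuine partition of unity $\{E_t\}_{t \in T} \subseteq \cN_\omega$ satisfying exactly $\alpha_{ts^{-1}}(E_s) = E_t$ for all $s,t \in T$. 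Because $\cN_\omega \subseteq N'\cap N^\omega \subseteq B'\cap N^\omega$ by hypothesis, each $E_t$ automatically commutes with every element of $B$ in the ultraproduct sense.

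Finally, I use \thref{lem:connes} to produce a sequence $\{e_t^\nu\}_{\nu\in\bN,\, t\in T}$ of partitions of unity in $\cN$ such that $(e_t^\nu)_\nu$ is a representative sequence for $E_t$. Since $\|\cdot\|_\varphi^\#$ induces the $*$-strong operator topology on bounded sets, the exact equivariance $\alpha_{ts^{-1}}(E_s) = E_t$ yields $\|\alpha_{ts^{-1}}(e_s^\nu) - e_t^\nu\|_\varphi^\# \to 0$ along $\omega$, proving \eqref{equ:equivariancevna}. The containment $\cN_\omega \subseteq B'\cap N^\omega$ gives \eqref{equ:commutevna}, and then \eqref{equ:commute2vna} follows from \eqref{equ:commutevna} by the same triangle-inequality argument used at the end of the proof of \thref{relConnesrelative} applied to $\|e_t^\nu b e_t^\nu - e_t^\nu b\|_\varphi$ together with $\|e_t^\nu b - b e_t^\nu\|_\varphi$.

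The main obstacle, though largely bookkeeping, is ensuring that Ocneanu's theorem applies in the form needed here: namely, that $\varphi_\omega$ is a faithful normal trace on $\cN_\omega$ invariant under $\alpha_\omega$ (guaranteed by the assumption on the center), that strong outerness is in place (guaranteed by \thref{lem:ocneanu_stronglyouter}), and that the single monotile $T$ can be used as the paving system (guaranteed by \thref{prop:monotileableepsilon}) so that the resulting Rokhlin partitions are indexed directly by $T$ rather than by a more complicated collection of paving sets.
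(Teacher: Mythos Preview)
Your proposal is correct and follows essentially the same route as the paper: both apply \thref{prop:monotileableepsilon} to see that $T$ is an $\varepsilon$-paving system, invoke Ocneanu's Rokhlin theorem on $\cN_\omega$ (with strong outerness supplied by \thref{lem:ocneanu_stronglyouter}) to get approximate partitions $\{E_t^{(n)}\}$, run an index selection to obtain an exact partition $\{E_t\}$ with $\alpha_{ts^{-1}}(E_s)=E_t$, lift to $\cN$ via \thref{lem:connes}, and then deduce \eqref{equ:commutevna} and \eqref{equ:commute2vna} from the inclusion $\cN_\omega\subseteq B'\cap N^\omega$ exactly as in \thref{relConnesrelative}. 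The only cosmetic difference is that the paper records the explicit Ocneanu bound $5|T|/\sqrt{n}$ and the extra commutation $[E_t^{(n)},\alpha_g(E_s^{(n)})]=0$, neither of which affects the argument.
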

\begin{proof}
By \thref{prop:monotileableepsilon}, $T$ is an $\varepsilon$-paving set for 
any $\varepsilon$. Using \cite[Chapter~6]{ocneanu2006actions} along with 
\thref{lem:ocneanu_stronglyouter} with $\varepsilon=\frac{1}{n}$, we get a 
sequence of partitions of unity $\{E_{t}^{(n)}\}_{t \in T}$ in 
$\cN_\omega\subset N'\cap N^\omega$ such that 
\[
\lvert\alpha_{ts^{-1}}(E_s^{(n)})-E_t^{(n)}\rvert_\varphi<\frac{5|T|}{\sqrt{n}}\,\,
\forall s,t\in T
\]
and
\[
[E_t^{(n)},\alpha_g(E_s^{(n)})]=0,\,\,\forall g\in\Gamma\text{ and 
}s,t \in T .
\] 
Apply the index selection trick (see 
\cite[Section~5.5]{ocneanu2006actions}) to the elements 
$\{E_{t}^{(n)}\}_{t \in T}$ obtained above to get a partition 
of unity $\{E_{t}\}_{t \in T}$ such that
\[
\alpha_{ts^{-1}}(E_s) = E_t \,\,
\forall s,t\in T
\]
and
\[
[E_t^n,\alpha_g(E_s^n)]=0,\,\,\forall g\in\Gamma\text{ and 
}s,t \in T .
\] 
Using \thref{lem:connes}, we obtain a sequence of 
partition of unity $\{e_{t}^{\nu}\}_{t \in T}$ in $\cN$ such that  
$\{e_{t}^{\nu}\}_{\nu}$ is a representative sequence for $E_{t}$ in 
$N_\omega\subset N'\cap N^\omega$. Thus, for any $s,t\in T$, we have 
\[
\alpha_{ts^{-1}}(e_s^{ \nu } )-e_t^{ \nu } \underset{{ \nu } \to 
\omega}{\longrightarrow} 0 \text{ *-strongly}.
\]
Equivalently,
\[
\left\| \alpha_{ts^{-l}}(e_s^{ \nu } )-e_t^{ \nu } \right\|_\varphi^\# \underset{{ 
\nu } \to \omega}{\longrightarrow} 0 \, .
\]
This establishes equation~\eqref{equ:equivariancevna}.
Equation~\eqref{equ:commutevna} follows from the fact that $N'\cap N^\omega\subseteq B'\cap N^\omega$. 
Consequently, using equation~\eqref{equ:commutevna}, we see that
\begin{align*}
\left\|e_t^{ \nu } be_t^{ \nu } -e_t^{ \nu } b\right\|_\varphi 
&\leq\left\|be_t^{ \nu } -e_t^{ \nu } b\right\|_\varphi \underset{{ 
		\nu } \to \omega}{\longrightarrow}  0,
\end{align*}so  
\begin{align*}
\left\|e_t^{ \nu } be_t^{ \nu } -be_t^{ \nu } \right\|_\varphi &\leq 
\left\|e_t^{ \nu } be_t^{ \nu } -e_t^{ \nu } b\right\|_\varphi + \left\|e_t^{ 
\nu } b-be_t^{ \nu } \right\|_\varphi \\ & \leq 2\left\|be_t^{ \nu } -e_t^{ \nu 
} b\right\|_\varphi \underset{{ 
	\nu } \to \omega}{\longrightarrow}  0. 
\end{align*}
This establishes equation~\eqref{equ:commute2vna}, thereby completing the proof.
\end{proof}

\begin{thm}\thlabel{thm:relativizedocneanu}
	Let $ A \subseteq  B $ be separable $C^*$-algebras such that $A$ contains an approximate identity for $B$.
	Let $\omega$ be a free ultrafilter on $\bN$.  
	Let 
	$\alpha:\bZ_N\to\aut(\cB)$ be an action which leaves $A$ invariant. 
	 Suppose for every central $\alpha^{**}$-invariant projection $P$ in $B^{**}$ such that $PB^{**}P$ is separable,  we have 
	  \[ (PA 
	P)'\cap (PA^{**}P)^\omega=  (PB P)'\cap (PA^{**}P)^\omega
	\]
	 and $\alpha^{**}_\omega:\bZ_N\to\aut((PA^{**}P)_\omega)$ is strongly free. Then, for any finite set of 
	elements 
	$S \subseteq \cB$, there exists a net of mutually orthogonal, 
	positive elements $\{p_{1}^\beta,p_{2}^\beta,\hdots,p_{N}^\beta\}_{\beta}$ 
	in $ 
	A $, with $p_{N+1}^\beta=p_1^\beta$, such that the following hold  for 
	all 
	$j\in \{1,2,\hdots, N\}$ and for all $b \in S$:
	\begin{align}
		\alpha_1(p_j^\beta)-p_{j+1}^\beta &\to 0\text{ *-strongly },\\
		bp_j^\beta-p_j^\beta b &\to 0\text{ *-strongly},\\
		p_j^\beta b p_j^\beta-b p_j^\beta &\to 0\text{ *-strongly},\\
		\sum_{l=1}^N p_j^\beta-1 &\to 0\text{ *-strongly },\\
		(p_j^\beta)^2-p_j^\beta &\to 0\text{ *-strongly }.
	\end{align} 
\end{thm}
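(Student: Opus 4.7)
The plan is to follow the two-step pattern already used in the proof of \thref{thm:relativizedconnes}, but with \thref{relConnesrelative} replaced by \thref{prtunityinvna}. The crucial observation is that $\bZ_N$ is itself a monotile, with the trivial paving center $\{e\}$, so by \thref{prop:monotileableepsilon} it $\varepsilon$-paves itself for every $\varepsilon>0$, which makes \thref{prtunityinvna} directly applicable with $T=\bZ_N$.

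\textbf{Step 1 (von Neumann level).} Given a finite set $S\subseteq B$ and a $*$-strong neighborhood $V$ of $0$ in $B^{**}$, I choose a normal state $\varphi$ on $B^{**}$ and $\varepsilon>0$ so that $\{b\in B^{**}:\|b\|_\varphi^\#<\varepsilon\}\subseteq V$. Let $P$ be the smallest central $\alpha^{**}$-invariant projection in $B^{**}$ dominating the central support of $\varphi$; since $\bZ_N$ is finite, $P$ is a finite join, so $PA^{**}P$ and $PB^{**}P$ both have separable preduals. Replacing $\varphi$ by its normalized $\bZ_N$-average (and, if necessary, slightly perturbing $\varepsilon$ to restore faithfulness on the corner), I may assume $\varphi$ is $\alpha^{**}$-invariant and faithful on $PA^{**}P$. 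The hypothesis of the theorem then gives
\[
(PAP)'\cap(PA^{**}P)^\omega \;=\; (PBP)'\cap(PA^{**}P)^\omega
\]
together with strong freeness of $\alpha^{**}_\omega$ on $(PA^{**}P)_\omega$ — and the latter is exactly what the proof of \thref{prtunityinvna} extracts from central freeness via \thref{lem:ocneanu_stronglyouter}, so the proposition applies with no further modification.

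Applying \thref{prtunityinvna} to the inclusion $PA^{**}P\subseteq PB^{**}P$ with weakly dense $C^*$-subalgebra $PBP$ and monotile $T=\bZ_N$, I obtain a partition of unity $\{e_j\}_{j=1}^N$ of $P$ inside $PA^{**}P$ satisfying, for all $j$ and all $b\in S$,
\begin{align*}
\|\alpha_1(e_j)-e_{j+1}\|_\varphi^\# &<\varepsilon,\\
\|e_j b-b e_j\|_\varphi^\# &<\varepsilon,\\
\|e_j b e_j-b e_j\|_\varphi^\# &<\varepsilon,
\end{align*}
with $e_{N+1}=e_1$. The remaining partition-of-unity condition $\|1-\sum_j e_j\|_\varphi^\#=0$ follows because $P$ dominates the central support of $\varphi$.

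\textbf{Step 2 (lifting from $A^{**}$ to $A$).} Exactly as in the second step of \thref{thm:relativizedconnes}, I view such a partition $\{e_j\}$ as the image of the minimal projections $f_j\in C(\bZ_N)$ under a $\bZ_N$-equivariant order zero map $\Theta:(C(\bZ_N),\sigma)\to(A^{**},\alpha^{**})$, where $\sigma$ is the translation action. The Kaplansky density-type result \cite[Lemma~1.1]{hirshberg2012decomposable} produces a net of order zero maps $\Psi^\mu:C(\bZ_N)\to A$ with $\Psi^\mu(f)\to\Theta(f)$ $*$-strongly for every $f\in C(\bZ_N)$. Setting $p_j^\beta:=\Psi^\mu(f_j)$ and indexing $\beta$ by the directed set of triples $(\varphi,\varepsilon,\mu)$ yields a net of mutually orthogonal positive contractions in $A$ that realizes the five claimed $*$-strong limits.

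\textbf{Expected main obstacle.} The subtle point is the cornering argument in Step 1: because $A^{**}$ and $B^{**}$ do not have separable preduals, \thref{prtunityinvna} cannot be applied globally, and I must restrict to $PA^{**}P$ for a suitably large central $\alpha^{**}$-invariant projection. Choosing $P$ so that it dominates every $\alpha^{**}$-translate of the central support of $\varphi$ is what keeps both the commutant-inclusion hypothesis and the strong freeness condition available on the corner, while also forcing $\|1-\sum_j e_j\|_\varphi^\#=0$. The fact that $\bZ_N$ is finite makes the relevant join finite (an advantage over the $\bZ$ case in \thref{thm:relativizedconnes}), after which the rest of the proof is a direct translation of the argument already used for $\bZ$.
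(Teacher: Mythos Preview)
Your proposal is correct and follows essentially the same approach as the paper: reduce to the separable corner $PA^{**}P$, average $\varphi$ over the finite group to make it invariant, apply \thref{prtunityinvna} with the monotile $T=\bZ_N$ (the whole group), and then lift to $A$ via the Kaplansky density-type lemma exactly as in Step~2 of \thref{thm:relativizedconnes}. Your observation that the theorem's hypothesis of strong freeness of $\alpha^{**}_\omega$ on $(PA^{**}P)_\omega$ is precisely the condition that the proof of \thref{prtunityinvna} uses after invoking \thref{lem:ocneanu_stronglyouter} is accurate and explains why the proposition applies without further work.
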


\begin{proof}
As in the proof of \thref{thm:relativizedconnes}, given a normal state 
$\varphi$ on $B^{**}$ and $\varepsilon>0$, one needs to find mutually 
orthogonal 
positive elements $p_{1},p_{2},\hdots,p_{N} \in A$ such that for 
$j=1,2,\ldots,N$ and all $b \in S$ we have
	\begin{align}
	\| \alpha_1(p_j)-p_{j+1} \|^{\#}_{\varphi} < \varepsilon ,\\
	\| bp_j-p_j b \|^{\#}_{\varphi} < \varepsilon ,\\
	\| p_j b p_j-b p_j\|^{\#}_{\varphi} < \varepsilon ,\\
	\| \sum_{l=1}^N p_j-1 \|^{\#}_{\varphi} < \varepsilon ,\\
	\| (p_j)^2-p_j\|^{\#}_{\varphi} < \varepsilon .
\end{align} 
 By replacing $\varphi$ by its average over its iterates, we may assume without 
 loss of generality that it is invariant. The rest of the proof proceeds just 
 as the proof of \thref{thm:relativizedconnes}, using  \thref{prtunityinvna} 
 and the fact that the entire group is a monotile in the case of a finite 
 group. 
\end{proof}

\section{Structure of Intermediate Algebras}
\label{maintheoremproof}

The goal of this section is to prove \thref{mainintmresult}. We begin with the following preparatory lemma.
\begin{lem}
\thlabel{correctiveepsilon}
Let $ A \subseteq  B $ be unital, separable $C^*$-algebras such that $A$ contains an approximate identity for $B$. 
Let 
$\omega$ be a free ultrafilter on $\bN$. Let 
$\Gamma$ be a discrete countable group, and let 
and $\alpha:\G\to\aut(B)$ be an action which leaves $A$ invariant. Suppose for every central $\alpha^{**}$-invariant projection $P$ in $B^{**}$ such that $PB^{**}P$ is separable,  we have 
\[
 (PA 
	P)'\cap (PA^{**}P)^\omega= (PB P)'\cap (PA^{**}P)^\omega
	\]
	 and
$\alpha^{**}:\G\to\aut(PA^{**}P)$ is centrally free. For any $s\in 
\Gamma\setminus\{e\}$,  
there exists $N \in\bN$ (depending only on $s$), such that for every 
finite set $F \subseteq   B $,  for any finite set of normal states
$\varphi_1,\varphi_2,\ldots,\varphi_M\in 
(\cB^{**}\overline{\rtimes}_{\alpha^{**}}\G)_*$ and for any 
$\delta>0$, there exist mutually orthogonal, positive contractions $p_1, p_2, 
\ldots, p_N \in  A $ such that the following hold 
 
\begin{enumerate}[label=(\alph*)]
\item \label{firstineq}\[\left\|\sum_{l=1}^Np_lx \lambda_s 
p_l\right\|_{\varphi_j}< \delta,~x \in F\,\,\,j=1,2,\ldots,M\]
\item\label{secondineq} \[\left\|\sum_{l=1}^Np_lx 
p_l-x_i\right\|_{\varphi_j}<\delta,~x \in F,\,\,\,j=1,2,\ldots,M\]
\end{enumerate}
\begin{proof}
Let $\Gamma_s$  be the subgroup of $\G$ generated by $s\neq e\in\G$. 
Then, $\G_s\cong\bZ$ or $\bZ_N$ (if order of $s$ is $N$). If $s$ has infinite 
order, fix any $N \geq 2$ (for example, $N=2$ would do). By 
\thref{thm:relativizedocneanu} 
(if $\G_s\cong\bZ_N$) or by \thref{thm:relativizedconnes} (if $\G_s\cong\bZ$) 
there exists a net of mutually orthogonal, positive contractions $\{p_1^\beta, 
p_2^\beta, \ldots, p_N^\beta\}_\beta$ in $\cA$, such that for 
all $l\in\{1,2,\hdots,N\}$ and for all $x \in F$, we have 
\begin{align*}
\alpha_{s}(p_l^\beta)-p_{l+1}^\beta &\to 0\text{ *-strongly, with } 
p_{N+1}^\beta=p_1^\beta,\\
xp_l^\beta-p_l^\beta x &\to 0\text{ *-strongly},\\
p_l^\beta x p_l^\beta-x p_l^\beta &\to 0\text{ *-strongly},\\
\sum_{l=1}^N p_l^\beta-1 &\to 0\text{ *-strongly },\\
(p_l^\beta)^2-p_l^\beta &\to 0\text{ *-strongly }.
\end{align*} 
 For any $b \in B$ and any $y \in B$, define positive linear functionals   
 $\varphi_j^s$ and $\varphi_j^y$ by
  $\varphi_j^s(b):=\varphi_j({\lambda_s}^* b \lambda_s)$, 
 $\varphi_j^y(b)=\varphi_j(y^* b y)$. Let 
 $L=\text{max}\{\|x\|\}_{x \in F}$.
Given $\delta>0$, 
choose $\beta_0$ such that for all $\beta \geq \beta_0$  for  
$l = 1,2,\ldots,N$, for $j= 1,2,\ldots M$ and for all $x \in F$ we have 
\begin{align*}
\left\|\alpha_s(p_l^\beta)-p_{l+1}^\beta\right\|_{\varphi_j^s}<
\frac{\delta}{2NL}, \\
\left\|x p_{l+1}^\beta-p_{l+1}^\beta 
x \right\|_{\varphi_j^s}<\frac{\delta}{2N}, \\
\left\|x p_l^\beta-p_l^\beta x\right\|_{\varphi_j} <\frac{\delta}{3N}, \\
\left\|(p_l^\beta)^2-p_l^\beta\right\|_{\varphi_j^{x}}<\frac{\delta}{3N}, \\
\left\|\sum_{l=1}^N p_l^\beta-1\right\|_{\varphi_j^{x}}<\frac{\delta}{3N}.
\end{align*}
Let $\beta>\beta_0$. For any $x\in F$ and for any $j = 1,2,\ldots M$, we have
\begin{align*}
\left\|\sum_{l=1}^Np_l^\beta x\lambda_sp_l^\beta\right\|_{\varphi_j} &\leq 
\sum_{l=1}^N\left\|p_l^\beta x\lambda_sp_l^\beta\right\|_{\varphi_j}\\ &\leq 
\sum_{l=1}^N\left\|p_l^\beta x\alpha_s ( p_l ^\beta ) \lambda_s-p_l^\beta x 
p_{l+1}^\beta\lambda_s\right\|_{\varphi_j}\\ & + \sum_{l=1}^N\left\|p_l^\beta 
x p_{l+1}^\beta\lambda_s-p_l^\beta p_{l+1}^\beta x 
\lambda_s\right\|_{\varphi_j}\\ & \leq 
\sum_{l=1}^N\left\|p_l^\beta\right\|\left\|x\right\| 
\left\|\left(\alpha_s ( p_l^\beta ) 
-p_{l+1}^\beta\right)\lambda_s\right\|_{\varphi_j}\\
 &  + 
\sum_{l=1}^N\left\|p_l^\beta\right\|\left\|\left(xp_{l+1}^\beta-p_{l+1}^\beta 
x\right)\lambda_s\right\|_{\varphi_j} .
\end{align*}
Note also that
\begin{align*}\left\|\left(\alpha_s ( p_l^\beta ) 
-p_{l+1}^\beta\right)\lambda_s\right\|_{\varphi_j}^2&
	=\varphi_j\left(\lambda_s^*\left(\alpha_s (p_l^\beta ) 
	-p_{l+1}^\beta\right)^*
	\left(\alpha_s ( p_l^\beta ) 
	-p_{l+1}^\beta\right)\lambda_s\right)\\&=\varphi_j^s
	\left(\left(\alpha_s ( p_l^\beta ) -p_{l+1}^\beta\right)^*\left(\alpha_s ( 
	p_l^\beta ) -p_{l+1}^\beta\right)\right)\\
	&=\left\|\alpha_s ( p_l^\beta ) 
	-p_{l+1}^\beta\right\|_{\varphi_j^s}^2 .
	\end{align*}
Similarly, we obtain  
\[\left\|\left(x p_{l+1}^\beta-p_{l+1}^\beta 
x\right)\lambda_s\right\|_{\varphi_j}=\left\|xp_{l+1}^\beta-p_{l+1}^\beta 
x\right\|_{\varphi_j^s} .
\]
Therefore, it follows that
\begin{align*}
\left\|\sum_{l=1}^Np_l^\beta x\lambda_sp_l^\beta\right\|_{\varphi_j}& \le 
\sum_{l=1}^N\|x\|\left\|\alpha_s ( p_l^\beta ) 
-p_{l+1}^\beta\right\|_{\varphi_j^s}+\sum_{l=1}^N
\left\|xp_{l+1}^\beta-p_{l+1} x\right\|_{\varphi_j^s}\\ &<
\frac{\delta}{2}+\frac{\delta}{2}=\delta .
\end{align*}
Likewise, for $\beta>\beta_0$, for any $x \in F$ and for $j=1,2,\ldots, M$, we 
have
\begin{align*}
\left\|\sum_{l=1}^N p_l^\beta xp_l^\beta-x\right\|_{\varphi_j} &\leq 
\left\|\sum_{l=1}^N p_l^\beta x p_l^\beta-\sum_{l=1}^N(p_l^\beta)^2 
x\right\|_{\varphi_j} + 
\left\|\sum_{l=1}^N\left((p_l^\beta)^2-p_l^\beta)\right)x\right\|_{\varphi_j} 
\\ & + \left\|\left(\sum_{l=1}^N p_l^\beta-1\right)x\right\|_{\varphi_j}\\ 
&\leq \sum_{l=1}^N\left\|p_l^\beta\right\|\left\| x p_l^\beta-p_l^\beta 
x\right\|_{\varphi_j} 
+\sum_{l=1}^N\left\|\left((p_l^\beta)^2-p_l^\beta)\right)x\right\|_{\varphi_j} 
\\ & +
\left\|\left(\sum_{l=1}^N 
p_l^\beta-1\right)x\right\|_{\varphi_j}\\&\le\sum_{l=1}^N\|x 
p_l^\beta-p_l^\beta 
x\|_{\varphi_j}+\sum_{l=1}^N\left\|\left((p_l^\beta)^2-p_l^\beta\right)\right\|_{\varphi_j^{x}}
 \\ & +
\left\|\left(\sum_{l=1}^N p_l-1\right)\right\|_{\varphi_j^{x}}\\ & <\delta .
\end{align*}
Thus, we may take $\{p_1,p_2,\hdots,p_N\}$ to be 
$\{p_1^\beta,p_2^\beta,\hdots,p_N^\beta\}$ for the above choice of $\beta$.
\end{proof}
\end{lem}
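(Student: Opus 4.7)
The plan is to reduce to the cyclic subgroup $\G_s := \langle s \rangle$ and then invoke one of the relative Rokhlin theorems from Section~\ref{relativizedrokhlinconnes}. When $s$ has infinite order, $\G_s \cong \bZ$; fixing any $N \geq 2$, \thref{thm:relativizedconnes} produces the desired tower. When $s$ has order $N$, $\G_s \cong \bZ_N$ and \thref{thm:relativizedocneanu} applies (the centrally free hypothesis on $\alpha^{**}$ supplies the strong freeness of $\alpha^{**}_\omega$ on each relevant $(PA^{**}P)_\omega$ via \thref{lem:ocneanu_stronglyouter}). In either case one obtains a net $\{p_1^\beta,\ldots,p_N^\beta\}_\beta$ of mutually orthogonal positive contractions in $A$, with $p_{N+1}^\beta = p_1^\beta$, such that $\alpha_s(p_l^\beta) - p_{l+1}^\beta$, $[x,p_l^\beta]$, $p_l^\beta x p_l^\beta - x p_l^\beta$, $(p_l^\beta)^2 - p_l^\beta$, and $\sum_l p_l^\beta - 1$ all tend to zero $*$-strongly for every $x \in F$. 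The choice of $N$ depends only on $s$ (the order when $s$ is torsion, a fixed $N \geq 2$ otherwise).

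The functionals in (a)--(b) live on the crossed product $\cB^{**}\overline{\rtimes}_{\alpha^{**}}\G$, so to exploit the $*$-strong convergence on $B^{**}$, I introduce auxiliary normal states on $B^{**}$. Specifically, I use the identity $\|y\lambda_s\|_{\varphi_j}^2 = \varphi_j(\lambda_s^* y^* y \lambda_s) = \|y\|_{\varphi_j^s}^2$, where $\varphi_j^s(b) := \varphi_j(\lambda_s^* b \lambda_s)$, and similarly define $\varphi_j^x(b) := \varphi_j(x^* b x)$ for $x \in F$. These form a finite family of normal states on $B^{**}$, so one can select a single index $\beta$ in the Rokhlin net for which each defect is smaller than a fraction of $\delta$ (adjusted by $N$, $M$, and $L := \max_{x \in F}\|x\|$) when measured against any one of them.

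With such a $\beta$ fixed, set $p_l := p_l^\beta$. For part (a), the key algebraic move is $p_l x \lambda_s p_l = p_l x \alpha_s(p_l) \lambda_s$; replacing $\alpha_s(p_l)$ by $p_{l+1}$ (with error controlled in $\varphi_j^s$) and then commuting $x$ past $p_{l+1}$ (with error again in $\varphi_j^s$) reduces the main term to $p_l p_{l+1} x \lambda_s = 0$ by orthogonality. Summing over $l$ and applying the triangle inequality gives the bound. For part (b), one telescopes
\[
\sum_l p_l x p_l - x \;=\; \sum_l p_l\,[x,p_l] \;+\; \sum_l \bigl((p_l)^2 - p_l\bigr)x \;+\; \Bigl(\sum_l p_l - 1\Bigr)x,
\]
each summand being small in $\|\cdot\|_{\varphi_j}$ (the latter two through $\|\cdot\|_{\varphi_j^x}$).

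The main technical hurdle is simply bookkeeping: the Rokhlin theorems of Section~\ref{relativizedrokhlinconnes} supply $*$-strong convergence on $B^{**}$ measured against normal states on $B^{**}$, but the conclusion demands smallness in seminorms induced by states on the larger crossed product von Neumann algebra. Once the translation via $\varphi_j^s$ and $\varphi_j^x$ is made explicit, the remaining argument is a finite triangle-inequality estimate whose constants scale controllably in $N$, $M$, and $L$. All of the conceptual content sits inside the two relative Rokhlin theorems; this lemma is the first place where those theorems are used to produce approximations on the crossed product side.
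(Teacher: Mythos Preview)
Your proposal is correct and follows essentially the same approach as the paper: reduce to the cyclic subgroup $\langle s\rangle$, invoke \thref{thm:relativizedconnes} or \thref{thm:relativizedocneanu} to obtain the Rokhlin tower, translate the crossed-product seminorms back to $B^{**}$ via the auxiliary states $\varphi_j^s$ and $\varphi_j^x$, and then carry out the same algebraic manipulations (using $\lambda_s p_l = \alpha_s(p_l)\lambda_s$ and orthogonality for (a), and the three-term telescoping for (b)). The paper's proof differs only in the explicit bookkeeping of the constants.
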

Given $s \in \Gamma \smallsetminus \{s\}$, and elements $p_1,p_2\ldots,p_N$ as 
above, we can define a completely positive contraction  $\Psi \colon B 
\rtimes_{\alpha,r} \Gamma \to B \rtimes_{\alpha,r} \Gamma$ by $\Psi (x) = 
\sum_{l=1}^N p_l x p_l$. Using this,
under the assumptions of 
\thref{thm:relativizedocneanu} and \thref{thm:relativizedconnes}, we can find a 
net of completely positive contractive contractions $\Psi_{\beta}: B 
\rtimes_{\alpha,r} \Gamma\to  B 
\rtimes_{\alpha,r} \Gamma$ such that for any $x \in B$ we have
\begin{equation*}
    \Psi_{\beta} (x\lambda_s ) \to 0 \; *\text{-strongly} 
    , 
   \end{equation*}
   and
    \begin{equation*}
     \Psi_{\beta}(x) \to x \; *\text{-strongly} .
    \end{equation*}
\begin{remark} 
\label{normrelation}
The following observation will be used in the proof of \thref{mainintmresult} 
below: for any $x \in B$ and for any $s \in \Gamma$ we have
\begin{align*}\|\alpha_s(x)\|_{\varphi_s}^2
	&=\varphi_s\left(\left(\lambda_sx\lambda_s^*\right)^*
	\left(\lambda_sx\lambda_s^*\right)\right)\\
	&=\varphi_s\left(\lambda_sx^*\lambda_s^*\lambda_sx\lambda_s^*\right)\\
	&=\varphi_s\left(\lambda_sx^*x\lambda_s^*\right)\\
	&=\varphi\left(\lambda_s^*\lambda_sx^*x\lambda_s^*\lambda_s\right)\\
	&=\varphi(x^*x)\\&=\|x\|^2_{\varphi}
\end{align*}
\end{remark}
\begin{proof}[Proof of \thref{mainintmresult}]
	Let $ C $ be an intermediate $C^*$-algebra of the form 
	\[
	 A \rtimes_{\alpha,r} \Gamma\subseteq C \subseteq B \rtimes_{\alpha,r} 
	\Gamma .
	\]
	 We want to show that 
	$\mathbb{E}( C )\subset C $.

	We first prove part (\ref{mainthm:condition:A**}) of the theorem, that is, we 
	assume for every central $\alpha^{**}$-invariant projection $P\in B^{**}$ such that $PB^{**}P$ is separable, we have 
	\[
	(PAP)'\cap (PA^{**}P)^{\omega}=(PBP)'\cap (P A^{**}P)^{\omega}
	\]
	and $\alpha^{**}:\Gamma\to\aut(PA^{**}P)$ is centrally free.

We do so in two 
steps. First, for an arbitrary element $x\in  C $, we claim that there exists 
bounded net $\{x^\gamma\}_\gamma\in  C $ such that $x^\gamma\to 
\mathbb{E}(x)$ $\sigma$-strongly in $ B ^{**} \overline{\rtimes}_{\alpha^{**}} 
\G$. To that 
end,  fix $x\in  C $ and $\delta>0$. Pick
$b_1,b_2,\hdots,b_m\in  B $ and $s_1,s_2,\hdots,s_m\in \G\setminus\{e\}$ 
such that
\begin{equation}
\label{equ:norminequlity}
\left\|x-\sum_{i=1}^m b_i\lambda_{s_i}-\mathbb{E}(x)\right\|<\frac{\delta}{3}.
\end{equation}
For $i=1,2,\ldots,m$, let $\Gamma_{s_i}$ be the subgroup of $\G$ generated by 
$s_i$.  Let $N_i$ be the order of $s_i$ if it is finite or some arbitrary integer greater than $1$ if it is infinite.
Now let $\varphi$ be a normal state on $ B 
^{**}\overline{\rtimes}_{\alpha^{**}} \Gamma$. 
We show for any $\varepsilon>0$ and for $j\in 1,2,\hdots,m $ there exist  contractions $q_1,q_2,\ldots,q_{Q} \in \cA$ such that: 
\begin{align}
\label{equ:inductionhypothesisone}
\left\|\sum_{l=1}^{Q} q_l\left(\sum_{i=1}^j 
b_i\lambda_{s_i} \right)q_l^*\right\|_\varphi &\leq 
\sum_{i=1}^{j}\left\|\sum_{l=1}^{Q} q_l b_i\lambda_{s_i} q_l^*\right\|_\varphi 
<\varepsilon 
\end{align}
and
\begin{align}
\label{equ:inductionhypothesistwo}
\left\|\sum_{l=1}^{Q} q_l \mathbb{E}(x) q_l^*-\mathbb{E}(x)\right\|_\varphi 
&<\varepsilon \, .
\end{align}
We prove this by induction on $j$. For $j=1$, this is 
\thref{correctiveepsilon}. 
Let us now assume that the claim holds for $j-1$. Choose
 contractions 
$q_1,q_2,\ldots,q_Q \in\cA$ such that  
equations~\eqref{equ:inductionhypothesisone} and 
\eqref{equ:inductionhypothesistwo} hold for $j-1$ in place of $j$, with the following parameters:
\begin{align}\label{equ:firstmainthmind}
\left\|\sum_{l=1}^Q q_l\left(\sum_{i=1}^{j-1} b_i\lambda_{s_i} 
\right)q_l^*\right\|_\varphi & \leq\sum_{i=1}^{j-1}\left\|\sum_{l=1}^Q q_l 
b_i\lambda_{s_i} q_l^*\right\|_\varphi<\frac{\varepsilon}{12N_j }
\end{align}
\begin{align}\label{equ:firstmainthmparttwoind}
\left\|\sum_{l=1}^Q q_l\mathbb{E}(x)q_l^*-\mathbb{E}(x)\right\|_\varphi 
&<\frac{\varepsilon}{6} .
\end{align}
For $i=1,2,\hdots,j$, define $x_i:=\sum_{l=1}^Q q_l b_i\alpha_{s_i}(q_l^*)$. 
Define normal states $\varphi_i$ on  $B ^{**} \overline{\rtimes}_{\alpha^{**}} 
\G $ by 
$\varphi_i(y):=\varphi(\lambda_{s_i}^* y \lambda_{s_i})$. Apply 
\thref{correctiveepsilon}, for $s_j$, $x_j$ and 
$\{\alpha_{s_i}^{-1}(x_i) \mid i=1,2,\ldots,j-1\}$, to choose mutually 
orthogonal 
positive contractions $\{p_1,p_2,\hdots,p_{N_j}\}$ such that:
\begin{equation}\label{equ:secmainthmind}
\left\|\sum_{k=1}^{N_j}p_k x_j\lambda_{s_j} 
p_k\right\|_\varphi<\frac{\varepsilon}{6},
\end{equation}
\begin{equation}\label{equ:secmainthm2ind}
\left\|\sum_{k=1}^{N_j}p_k\left(\sum_{l=1}^Q 
q_l\mathbb{E}(x)q_l^*\right)p_k-\sum_{l=1}^Q 
q_l\mathbb{E}(x)q_l^*\right\|_{\varphi}<\frac{\varepsilon}{6},
\end{equation}
and for $i=1,2,\ldots,j-1$ and for $k=1,2,\ldots,N_j$ we have
\begin{equation}\label{equ:mainthmcommutation}
\left\|\alpha_{s_i}^{-1}(x_i)p_k-p_k 
\alpha_{s_i}^{-1}(x_i)\right\|_{\varphi}<\frac{\varepsilon}{12N_j(j-1)}. 
\end{equation}
By Remark~\ref{normrelation}, equation \ref{equ:mainthmcommutation} implies 
that for $i=1,2,\ldots,j-1$ and for 
 $k=1,2,\ldots,N_j$ we have
\begin{equation}\label{equ:firstmainthm2ind}
\left\|x_i\alpha_{s_i}(p_k)-\alpha_{s_i}(p_k) 
x_i\right\|_{\varphi_{i}}<\frac{\varepsilon}{12N_j(j-1)}.
\end{equation}
For $L:=l+(k-1)Q$ with $l\in\{1,2,\hdots,Q\}$ and $k\in\{1,2,\hdots,N_j\}$, we set $r_L:=p_kq_l\in\A$. Observe that
\begin{align*}
\left\|\sum_{L=1}^{N_jQ}r_L 
\left(\sum_{i=1}^{j-1}b_i\lambda_{s_i}\right){r_L}^* \right\|_\varphi= 
&\left\|\sum_{k=1}^{N_j}\sum_{l=1}^Qp_k q_l 
\left(\sum_{i=1}^{j-1}b_i\lambda_{s_i}\right)q_l^* p_k \right\|_\varphi\\&\leq 
\sum_{i=1}^{j-1}\sum_{k=1}^{N_j}\left\|\sum_{l=1}^Q q_l b_i\lambda_{s_i}q_l^* 
p_k\right\|_\varphi\\ &=
\sum_{i=1}^{j-1}\sum_{k=1}^{N_j}\left\|\sum_{l=1}^Q q_l b_i\alpha_{s_i}(q_l^*) 
\alpha
_{s_i}(p_k) \right\|_{\varphi_i}\\&=
\sum_{i=1}^{j-1}\sum_{k=1}^{N_j}\left\|x_i \alpha
_{s_i}(p_k) \right\|_{\varphi_i}\\ & \leq
\sum_{i=1}^{j-1}\sum_{k=1}^{N_j}\left\|x_i\alpha_{s_i}(p_k)-\alpha_{s_i}(p_k) 
x_i\right\|_{\varphi_i}+
\sum_{i=1}^{j-1}\sum_{k=1}^{N_j}\left\|\alpha_{s_i}(p_k) 
x_i\right\|_{\varphi_{i}}\\
&\le\sum_{i=1}^{j-1}\sum_{k=1}^{N_j}\left\|x_i\alpha_{s_i}(p_k)-\alpha_{s_i}(p_k)
 x_i\right\|_{\varphi_i}+\sum_{i=1}^{j-1}\sum_{k=1}^{N_j}\|\alpha_{s_i}(p_k)\|
\|x_i\|_{\varphi_i}\\
&\le\sum_{i=1}^{j-1}\sum_{k=1}^{N_j}
\left\|x_i\alpha_{s_i}(p_k)-\alpha_{s_i}(p_k) 
x_i\right\|_{\varphi_i}+N_j\sum_{i=1}^{j-1}\left\|\sum_{l=1}^Q q_l 
b_i\lambda_{s_i}q_l^*\right\|_\varphi \\ 
&\stackrel{(\ref{equ:firstmainthm2ind}),(\ref{equ:firstmainthmind})}{<}
\frac{\varepsilon}{12}+\frac{\varepsilon}{12}=\frac{\varepsilon}{6}.
\end{align*}
At the same time, let us also observe that
\begin{align*}
\left\|\sum_{L=1}^{N_jQ}r_Lb_j\lambda_{s_j} {r_L}^*\right\|_\varphi&=
\left\|\sum_{k=1}^{N_j}\sum_{l=1}^Q p_k q_l b_j\lambda_{s_j} q_l^* 
p_k\right\|_\varphi\\& = \left\|\sum_{k=1}^{N_j}p_k\left(\sum_{l=1}^Q q_l 
b_j\alpha_{s_j}q_l^*\right)\lambda_{s_j} p_k\right\|_\varphi\\ &=
\left\|\sum_{k=1}^{N_j}p_k x_j\lambda_{s_j} 
p_k\right\|_\varphi\stackrel{(\ref{equ:secmainthmind})}{<}\frac{\varepsilon}{6} .
\end{align*}
Hence, using the triangle inequality, we obtain that
\begin{align*}
\left\|\sum_{L=1}^{N_jQ}r_L \left(\sum_{i=1}^j 
b_i\lambda_{s_i}\right){r_L}^*\right\|_\varphi=\left\|\sum_{k=1}^{N_j}\sum_{l=1}^Qp_k q_l \left(\sum_{i=1}^j 
b_i\lambda_{s_i}\right)q_l^* p_k\right\|_\varphi<
\frac{\varepsilon}{3}\numberthis \label{secondofthethreeind} .
\end{align*}
Furthermore,
\begin{align*}
\left\|\sum_{L=1}^{N_jQ}r_L\mathbb{E}(x){r_L}^*-\mathbb{E}(x)\right\|_\varphi &=
\left\|\sum_{k=1}^{N_j}\sum_{l=1}^Q p_k q_l\mathbb{E}(x)q_l^* p_k-\mathbb{E}(x)\right\|_\varphi \\& \leq 
\left\|\sum_{k=1}^{N_j}p_k\left(\sum_{l=1}^Q 
q_l\mathbb{E}(x)q_l^*\right)p_k-\sum_{l=1}^Q 
q_l\mathbb{E}(x)q_l^*\right\|_{\varphi} \\ & + 
\left\|\sum_{l=1}^{Q}q_l\mathbb{E}(x)q_l^*-\mathbb{E}(x)\right\|_\varphi\\
&\stackrel{(\ref{equ:secmainthm2ind}),(\ref{equ:firstmainthmparttwoind})}{<}
\frac{\varepsilon}{6}+\frac{\varepsilon}{6}=\frac{\varepsilon}{3} . \numberthis  
\label{thirdofthethreeind}
\end{align*}
This concludes proof by induction. For $\delta>0$, a state $\varphi$ as 
above and for $j=m$, choose $q_1,q_2,\ldots,q_Q$ as above, and set 
$x_{\delta,\varphi}:=\sum_{l=1}^Q q_l x q_l^*$. Then
\begin{align*}
&\left\|x_{\delta,\varphi}-\mathbb{E}(x)\right\|_\varphi\\ &=\left\|\sum_{l=1}^Q 
q_lxq_l^*-\mathbb{E}(x)\right\|_\varphi \\&=\left\|\sum_{l=1}^Q 
q_l\left(x-\sum_{i=1}^mb_i\lambda_{s_i}-\mathbb{E}(x)\right)q_l^*
+\sum_{l=1}^Qq_l\left(\sum_{i=1}^mb_i\lambda_{s_i}\right)q_l^*
+\sum_{l=1}^Qq_l\left(\mathbb{E}(x)\right)q_l^*-\mathbb{E}(x)\right\|_\varphi\\
&\leq \left\|x-\left(\sum_{i=1}^m 
b_i\lambda_{s_i}\right)-\mathbb{E}(x)\right\|+ 
\left\|\sum_{l=1}^Q q_l\left(\sum_{i=1}^m 
b_i\lambda_{s_i}\right)q_l^*\right\|_\varphi +
\left\|\sum_{l=1}^Q q_l\mathbb{E}(x)q_l^*-\mathbb{E}(x)\right\|_\varphi \\ &<\delta .
\end{align*}
Since $\delta>0$ is arbitrary and $x_{\delta,\varphi}\in  C $, we see that we 
can find a net of elements $\{x^{\gamma}\}_{\gamma}\subset C$ such that 
$x^{\gamma}\to \mathbb{E}(x)$ $\sigma$-strongly and, and in particular, $\sigma$-weakly. 

Let $\mathcal{K}=\text{conv}(\{x^\gamma\}_\gamma)$. Using the Hahn-Banach separation theorem, we conclude that $\mathbb{E}(c)$ lies in the norm closure of $\mathcal{K}$.
Therefore, $\mathbb{E}(C)\subset C$. It now follows from 
\cite[Proposition~3.4]{Suz17} that $C = \mathbb{E}(C)\rtimes_{\alpha,r} \G$.

We now prove part (\ref{mainthm:condition:Rokhlin-dimension}) of the theorem; that is, we assume that the action $\alpha$ has a pointwise finite relative 
Rokhlin dimension.  The idea of the proof is a somewhat simpler version of the 
proof of the first case, as our approximations are done in the norm.

As before,  fix $x\in  C $ and $\delta>0$. Pick
$b_1,b_2,\hdots,b_m\in  B $ and $s_1,s_2,\hdots,s_m\in \G\setminus\{e\}$ 
such that
\begin{equation}
	\label{equ:norminequlity-2}
	\left\|x-\sum_{i=1}^m 
	b_i\lambda_{s_i}-\mathbb{E}(x)\right\|<\frac{\delta}{3}.
\end{equation}
We now show for any $\varepsilon>0$ and for $j\in 1,2,\hdots,m $ there exist contractions $q_1,q_2,\ldots,q_{Q} \in \cA$ such that: 
\begin{align}
	\label{equ:inductionhypothesisone-2}
	\left\|\sum_{l=1}^{Q} q_l\left(\sum_{i=1}^j 
	b_i\lambda_{s_i} \right)q_l^*\right\| &\leq 
	\sum_{i=1}^{j}\left\|\sum_{l=1}^{Q} q_l b_i\lambda_{s_i} q_l^*\right\|
	<\varepsilon 
\end{align}
and
\begin{align}
	\label{equ:inductionhypothesistwo-2}
	\left\|\sum_{l=1}^{Q} q_l \mathbb{E}(x) q_l^*-\mathbb{E}(x)\right\|
	&<\varepsilon \, .
\end{align}
This is sufficient to prove the claim.

We prove this by induction on $j$. For $j=1$, we let $N$ be the order of $s_1$ if it is finite or some arbitrary integer greater than $2$ if 
$s_1$ has infinite order. Let $d_1$ be the relative Rokhlin dimension of the 
restriction of $\alpha$ to the group generated by $s_1$. Set $F = 
\{\mathbb{E}(x),\mathbb{E}(x)^*,b_1,b_1^*\}$. Set $\eta = \varepsilon/3(d_1+1)N$. Pick positive 
contractions $\{f_k^{(l)} \mid k=1,2,\ldots,N \, , \, l=0,1,\ldots,d_1 \}$ such 
that for all $x\in F$
	\begin{enumerate}
	\item $\|(\sum_{l=0}^{d_1} \sum_{k=1}^N f_k^{(l)})x-x \|<\eta$.
	\item For all $j \neq k$ and for all $l$ we have 
	$\|(f_k^{(l)}f_j^{(l)})x\| < \eta$.
	\item For all $l$ and for all $k$ we have $\| (\alpha_{s_1}(f_k^{(l)}) 
	- f_{k+1}^{(l)})x\| < \eta$ with the convention that 
	$f_{N+1}^{(l)} = 
	f_1^{(l)}$. 
	\item For any indices $k,l$ we have 
	$\| [ x,f_k^{(l)} ] \| < \eta$. 
\end{enumerate} 
We may furthermore assume, by making $\eta$ smaller if needed and by 
using polynomial approximations, that for all $x\in F$, we have
	\begin{enumerate}
	\item For all $j \neq k$ and for all $l$ we have 
	$\left \|\left (\sqrt{ f_k^{(l)} } \sqrt{ f_j^{(l)} } \right )x \right \| < \eta$.
	\item For all $l$ and for all $k$ we have $\left \| \left (\alpha_{s_1} \left (\sqrt { 	f_k^{(l)} } \right )  
	- \sqrt{ f_{k+1}^{(l)} } \right )x \right \| < \eta$.
	\item For any indices $k,l$ we have 
	$\left \| \left [ x,\sqrt { f_k^{(l)} } \right ] \right \| < \eta$. 
\end{enumerate} 

So, 
\begin{align*}
	\left \| \sum_{l=0}^{d_1} \sum_{k=1}^N \sqrt { f_k^{(l)} } b_1 \lambda_{s_1} 
	\sqrt { f_k^{(l)} } \right \| 
	& = 
	\left \| \sum_{l=0}^{d_1} \sum_{k=1}^N \sqrt { f_k^{(l)} }  b_1 
	 \alpha_{s_1}( \sqrt { f_k^{(l)} } ) \lambda_{s_1} \right \| \\
	 & <
	 \left	\| \sum_{l=0}^{d_1} \sum_{k=1}^N \sqrt { f_k^{(l)} }  b_1 
	  \sqrt { f_{k+1} ^{(l)} }  \lambda_{s_1} \right \| + (d_1+1)N\eta \\
	  & <
	  \left \| \sum_{l=0}^{d_1} \sum_{k=1}^N \sqrt { f_k^{(l)} } 
	  \sqrt { f_{k+1} ^{(l)} }  b_1  \lambda_{s_1} \right \| + 2(d_1+1)N\eta \\
	  & < 3 (d_1+1)N\eta = \varepsilon .
\end{align*}
and
\begin{align*}
	\left \| \sum_{l=0}^{d_1} \sum_{k=1}^N \sqrt { f_k^{(l)} } \mathbb{E}(x) 
	\sqrt { f_k^{(l)} } - \mathbb{E}(x) \right \| & <  
	\left \| \sum_{l=0}^{d_1} \sum_{k=1}^N  f_k^{(l)}  \mathbb{E}(x) - \mathbb{E}(x)
	 \right \| +  (d_1+1)N\eta \\
	& <  2(d_1+1)N\eta < \varepsilon .
\end{align*}
Thus, we found a finite collection of positive contractions as needed (which we 
can relabel as $q_1,q_2,\ldots,q_Q$).

Let $N_j$ be the order of $s_j$ if it is finite, or 
some arbitrary integer greater than $1$ otherwise. Using the induction hypothesis, choose
 contractions 
$q_1,q_2,\ldots,q_Q \in\cA$ such that  
equations~\eqref{equ:inductionhypothesisone-2} and 
\eqref{equ:inductionhypothesistwo-2} hold for $j-1$ in place of $j$, and with 
the following parameters:
\begin{align}\label{equ:firstmainthmindd}
	\left\|\sum_{l=1}^Q q_l\left(\sum_{i=1}^{j-1} b_i\lambda_{s_i} 
	\right)q_l^*\right\| & \leq\sum_{i=1}^{j-1}\left\|\sum_{l=1}^Q q_l 
	b_i\lambda_{s_i} q_l^*\right\| <\frac{\varepsilon}{12(d_j+1)N_j }
\end{align}
\begin{align}\label{equ:firstmainthmparttwoindd}
	\left\|\sum_{l=1}^Q q_l\mathbb{E}(x)q_l^*-\mathbb{E}(x)\right\| 
	&<\frac{\varepsilon}{6} .
\end{align}
For $i=1,2,\hdots,j$, define $x_i:=\sum_{l=1}^Q q_l b_i\alpha_{s_i}(q_l^*)$.  

 Where $d_j$ is the relative Rokhlin dimension of the 
restriction of $\alpha$ to the group generated by $s_j$. 

  Using the same argument as before, with 
 $d_j$ being the Rokhlin dimension of the restriction of the action to the 
 subgroup generated by $s_j$, set $\eta = \varepsilon/{36(d_j + 1)(j-1)N_j}$, and 
 pick 
 positive  
contractions $\{f_k^{(l)} \mid k=1,2,\ldots,N_j \, , \, l=0,1,\ldots,d_j \}$ 
such for any 
\[
x \in \{x_j,x_j^*,\sum_{l=1}^Q q_l\mathbb{E}(x)q_l^*,\sum_{l=1}^Q q_l\mathbb{E}(x^*)q_l^*\} \cup 
	\{\alpha_{s_i}^{-1}(x_i) , \alpha_{s_i}^{-1}(x_i^*) \mid i=1,2,\ldots,j\}
	\]
we have
\begin{enumerate}
	\item $\left \|(\sum_{l=0}^{d_j} \sum_{k=1}^{N_j} f_k^{(l)})x-x \right \|<\eta$.
	\item For all $j \neq k$ and for all $l$ we have 
	$\left \|(\sqrt{ f_k^{(l)} } \sqrt{ f_j^{(l)} })x \right \| < \eta$.
	\item For all $l$ and for all $k$ we have $\left \| \left ( \alpha_{s_j} \left ( \sqrt { 
		f_k^{(l)}} \right )  
	- \sqrt{ f_{k+1}^{(l)} } \right ) x \right \| < \eta$.
	\item For any 
	indices $k,l$ we have 
	$\left \| \left  [ x,\sqrt { f_k^{(l)} } \right ] \right \| < \eta$. 
\end{enumerate} 
We thus have
\[
	\left \| \sum_{l=0}^{d_j} \sum_{k=1}^{N_j} \sqrt { f_k^{(l)} } x_j \lambda_{s_j} 
	\sqrt { f_k^{(l)} } \right \|
< 3 (d_j+1)N_j\eta 
\]
and
\[
	\left \| \sum_{l=0}^{d_j} \sum_{k=1}^{N_j} \sqrt { f_k^{(l)} } \sum_{r=1}^Q 
	q_r\mathbb{E}(x)q_r^*
	\sqrt { f_k^{(l)} } - \sum_{r=1}^Q q_r\mathbb{E}(x)q_r^* \right \| <  2(d_j+1)N_j\eta 
	\, .
\]
Arguing as in the first case, we have 
\[
\left\|\sum_{l=0}^{d_j} \sum_{k=1}^{N_j} \sum_{r=1}^Q\sqrt { f_k^{(l)} } 
q_r 
\left(\sum_{i=1}^{j}b_i\lambda_{s_i}\right)q_r^* \sqrt { f_k^{(l)} }
\right\|< \frac{\varepsilon}{3}.
\]
and
\[
	\left\|\sum_{l=0}^{d_j} \sum_{k=1}^{N_j} \sum_{r=1}^Q\sqrt { f_k^{(l)} } 
	q_r \mathbb{E}(x)q_r^* 
	\sqrt { f_k^{(l)} } -\mathbb{E}(x)\right\| <\frac{\varepsilon}{3} 
\, .
\]
The rest of the proof is similar to that of the first case. We give a short explanation for readers' convenience. For $\delta>0$ arbitrary and $j=m$, choose contractions $q_1,q_2,\hdots,q_Q\in A$ as above. Set $x_\delta:=\sum_{l=1}^Q{q_l}xq_l^*$. Then we obtain
\[
\left\|x_\delta-\mathbb{E}(x)\right\|<\delta
\]
This shows there exists a net of elements $\{x_\delta\}_\delta\subset C$ such that $x_\delta\rightarrow\mathbb{E}(x)$ in  norm. Thus we conclude that $\mathbb{E}(C)\subset C$  and hence $C = \mathbb{E}(C)\rtimes_{\alpha,r} \G$ (by \cite[Proposition~3.4]{Suz17}).
\end{proof}

\section{Examples}\label{Examples}

We conclude the paper by giving a few natural classes of examples that satisfy the hypotheses of Theorem \ref{mainintmresult}.

It is well known (see \cite[Theorem 6.2.11]{gk}) that if $A$ is a type I $C^*$-algebra, then $A$ contains an essential ideal that has a continuous trace. Moreover, $A$ has a composition series $\{I_\theta\,|\, 0\leq \theta\leq \beta\}$,  such that $I_{\theta+1}/I_\theta$ has Hausdorff spectrum for each $\theta <\beta$. It can be further shown that if  $\alpha\colon\Gamma\to\text{Aut}(A)$ is an action on a type I $C^*$-algebra $A$, such that the induced action of $\Gamma$ on $\hat{A}$ is free and a composition series as mentioned above is assumed to be $\Gamma$-invariant, then the induced action of $\Gamma$ on $PA^{**}P$ is centrally free. Here, $P$ is a central $\alpha^{**}$-invariant projection in $B^{**}$ so that $PB^{**}P$ is separable.

We begin by studying free actions on a $C^*$-algebra with the Hausdorff spectrum and show that the induced action on the cut down of its double dual by a central invariant projection is centrally free. 

\begin{prop}\thlabel{prop_topologically_free}
Let $\Gamma$ be a countable discrete group. Let $A$ be a separable $C^*$-algebra such that $\hat{A}$ is Hausdorff, and let $\alpha:\Gamma\to \aut(A)$ be an action that the induced action of  $\Gamma$ on $\hat{A}$ is free. Then for every central $\alpha^{**}$-invariant projection $P\in A^{**}$  so that $PA^{**}P$ is separable, $\alpha^{**}$ acts centrally freely on $PA^{**}P$.  
\end{prop}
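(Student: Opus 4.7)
The plan is to fix $g \in \Gamma \setminus \{e\}$ and a nonzero $\alpha_g^{**}$-invariant central projection $Q \leq P$ in $A^{**}$, and show that $\alpha_g^{**}|_{QA^{**}Q}$ is not centrally trivial, by producing a nonzero central element $z \in Z(QA^{**}Q)$ with $z \cdot \alpha_g^{**}(z) = 0$. Once such a $z$ is in hand, the constant sequence $(z,z,\ldots)$ lies in $\mathcal{C}_\omega(QA^{**}Q)$ (since $z$ commutes with all of $QA^{**}Q$) and represents a nonzero element of $(QA^{**}Q)_\omega$ (since $z\neq 0$); meanwhile $(\alpha_g^{**})_\omega$ sends it to the class of the constant sequence $(\alpha_g^{**}(z),\alpha_g^{**}(z),\ldots)$, which is a different ultrapower element because two constant sequences agree modulo $\mathcal{I}_\omega$ only if their values agree, and here $z \neq \alpha_g^{**}(z)$ (their product is zero yet $z\neq 0$). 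Hence $(\alpha_g^{**})_\omega \neq \mathrm{id}$.

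The key tool is the Dauns--Hofmann embedding $C_0(\hat{A}) \hookrightarrow Z(M(A)) \subseteq Z(A^{**})$, through which $\alpha_g^{**}$ restricts on $C_0(\hat{A})$ to the pullback $f \mapsto f \circ g^{-1}$, and each irreducible $\pi$ extends to send $f \in C_0(\hat{A})$ to the scalar $f([\pi]) \cdot I_{H_\pi}$. First I would locate an irreducible $[\pi] \in \hat{A}$ with $\pi^{**}(Q) \neq 0$: fix a faithful normal state on the separable von Neumann algebra $PA^{**}P$, form the associated GNS representation to embed $PA^{**}P$ faithfully and normally into a separable type I von Neumann algebra $M$, and disintegrate $M$ as $\int^{\oplus} B(H_s)\, d\mu(s)$ over a standard Borel space $(S,\mu)$ equipped with a Borel map $s \mapsto [\pi_s] \in \hat{A}$. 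Then the image of $Q$ in $Z(M) \cong L^\infty(S,\mu)$ is the characteristic function of a positive-measure Borel subset $S_Q$, and any $s \in S_Q$ yields $[\pi] := [\pi_s]$ with $\pi^{**}(Q) = I_{H_\pi} \neq 0$.

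Next I would use freeness directly: $g[\pi] \neq [\pi]$, and since $\hat{A}$ is locally compact Hausdorff, I pick disjoint open $U_0 \ni [\pi]$ and $V_0 \ni g[\pi]$, and set $U := U_0 \cap g^{-1}V_0$, which contains $[\pi]$ and satisfies $U \cap gU = \emptyset$. By Urysohn choose $f \in C_0(\hat{A})^+$ with $f([\pi]) = 1$ and $\operatorname{supp}(f) \subseteq U$; then $f \cdot \alpha_g(f) = 0$ in $C_0(\hat{A}) \subseteq Z(A^{**})$. Setting $z := Qf \in Z(QA^{**}Q)$, we obtain $z \neq 0$ (since $\pi^{**}(z) = f([\pi])\pi^{**}(Q) \neq 0$) and $z \cdot \alpha_g^{**}(z) = Q^2 f \alpha_g(f) = 0$, using $\alpha_g^{**}(Q) = Q$. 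The main obstacle is the first step: for type I algebras with continuous spectrum, the central support of a single irreducible class in $A^{**}$ can be subtle, and the separability hypothesis on $PA^{**}P$ is precisely what enables the disintegration argument to pin down an irreducible representation on which $Q$ is nonzero.
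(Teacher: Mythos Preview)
Your overall strategy is sound: produce a nonzero $z\in Z(QA^{**}Q)$ with $z\cdot\alpha_g^{**}(z)=0$, so that the constant sequence at $z$ is a centralizing sequence moved by $(\alpha_g^{**})_\omega$. The Dauns--Hofmann embedding of $C_0(\hat{A})$ into $Z(A^{**})$ and the choice $z=Qf$ with $f\cdot\alpha_g(f)=0$ are exactly the right ingredients.

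The gap is in your disintegration step. You claim that for $s\in S_Q$ one obtains $\pi_s^{**}(Q)=I_{H_{\pi_s}}$, but this fails: the fiber evaluation $M\to B(H_s)$ in a direct integral is \emph{not} a normal $*$-homomorphism, so it does not agree with the normal extension $\rho_s^{**}$ of the fiber representation. Concretely, take $A=C[0,1]$ and let $Q$ be the central projection in $A^{**}$ onto the Lebesgue-diffuse summand $L^\infty[0,1]$. Your disintegration of $QA^{**}Q\cong L^\infty[0,1]$ is over $([0,1],\mathrm{Leb})$ with one-dimensional fibers, and the fiber representation $\rho_s$ of $A$ is just the point evaluation $\pi_s\colon f\mapsto f(s)$; but $\pi_s^{**}$ factors through the \emph{atomic} projection at $s$, so $\pi_s^{**}(Q)=0$ for every $s$. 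Thus there is no irreducible $[\pi]$ with $\pi^{**}(Q)\neq 0$, and your method of locating one cannot succeed in general.

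The fix is to bypass the search for $[\pi]$ altogether. By freeness and Hausdorffness of $\hat{A}$, the positive functions $f\in C_0(\hat{A})$ whose support $U$ satisfies $gU\cap U=\emptyset$ span a norm-dense ideal of $C_0(\hat{A})$: cover any compact set by finitely many such $U$'s and take a subordinate partition of unity. Since $C_0(\hat{A})$ is weakly dense in $Z(A^{**})$ and $Q\neq 0$, one of these $f$'s must satisfy $Qf\neq 0$, and then your $z=Qf$ works as intended. This is essentially the paper's argument, which runs the contrapositive: if $(\alpha_g^{**})_\omega$ were trivial on $(QA^{**}Q)_\omega$ then $\alpha_g^{**}$ would fix $QC_0(\hat{A})$ pointwise, hence $Q(f-\alpha_g(f))=0$ for all $f\in C_0(\hat{A})$; freeness forces the ideal generated by $\{f-\alpha_g(f)\}$ to be all of $C_0(\hat{A})$, so $Q\cdot C_0(\hat{A})=0$ and thus $Q=0$ by weak density.
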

\begin{proof}
Let us suppose there is a central $\alpha^{**}$-invariant projection $P\in A^{**}$ and $g\neq 0$ so that $PA^{**}P$ is separable and $(\alpha^{**}_g)_\omega$ acts trivially on $(PA^{**}P)_\omega$. Then, $PC_0(\widehat{A})1_{A^{**}}P\subseteq PZ(A^{**})P\subseteq (PA^{**}P)_\omega$. Because $C_0(\hat{A})1_{A^{**}}$ is weakly dense in $Z(A^{**})$, there exists a non-empty $\Gamma$-invariant subset of $\hat{A}$ such that  $\alpha_g$ acts trivially identity on it, and in particular, the induced action of $\alpha$ on $\hat{A}$ is not free. 
\end{proof}

\begin{cor}
Let $\alpha:\Gamma\to\aut(B)$ be an action of countable discrete group on a separable $C^*$-algebra $B$. Let $A\subseteq B$ be an inclusion of $\Gamma$-invariant separable $C^*$-algebra $A$ which is type I $C^*$-algebra. Furthermore, assume $A$ contains an  approximate identity of $B$ and has a $\Gamma$-invariant composition series $\{I_\theta\,|\,0\leq \theta\leq \beta\}$ such that $I_{\theta+1}/I_\theta$, for $\theta<\beta$ has Hausdorff spectrum and the action of $\Gamma$ on $\hat{A}$ is free. Then for every central $\alpha^{**}$-invariant projection $P\in B^{**}$ with $PB^{**}P$ is separable, $\alpha^{**}$ acts centrally freely on $PA^{**}P$. 
\end{cor}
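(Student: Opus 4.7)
The plan is to reduce the corollary to Proposition \ref{prop_topologically_free} using the $\Gamma$-invariant composition series to decompose $A^{**}$. For each $\theta \leq \beta$, let $E_\theta \in Z(A^{**})$ be the $\alpha^{**}$-invariant central projection with $I_\theta^{**} = E_\theta A^{**}$ (so $E_\theta = \sup_{\theta'<\theta} E_{\theta'}$ at limit ordinals, $E_0 = 0$, $E_\beta = 1$), and set $F_\theta = E_{\theta+1} - E_\theta$ for successor indices. Then $\{F_\theta\}$ are mutually orthogonal $\alpha^{**}$-invariant central projections in $A^{**}$ summing weakly to $1$, and $F_\theta A^{**}$ is equivariantly isomorphic to $(I_{\theta+1}/I_\theta)^{**}$. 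Each subquotient has Hausdorff spectrum, identified with a $\Gamma$-invariant subspace of the free $\Gamma$-space $\hat{A}$, so the induced action on each such spectrum is free.

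Next I would exploit that $P \in Z(B^{**})$ commutes with $A^{**}$. The normal $*$-homomorphism $a \mapsto Pa$ from $A^{**}$ to $B^{**}$ has kernel of the form $(1-z)A^{**}$ for some $\alpha^{**}$-invariant central projection $z \in Z(A^{**})$, giving an equivariant von Neumann algebra isomorphism $PA^{**}P = PA^{**} \cong zA^{**}$. The algebra $zA^{**}$ is separable, as it embeds in the separable $PB^{**}P$. To establish central freeness of $\alpha^{**}$ on $zA^{**}$, let $Q \leq z$ be any $\alpha^{**}$-invariant central projection in $zA^{**}$ and let $g \neq e$. Since $\sum_\theta F_\theta = 1$ weakly and $Q \neq 0$, there exists $\theta$ with $F_\theta Q \neq 0$, and $F_\theta Q A^{**}$ is simultaneously a direct summand of $QA^{**}$ and a separable direct summand of $F_\theta A^{**} \cong (I_{\theta+1}/I_\theta)^{**}$, corresponding to an $\alpha^{**}$-invariant central projection in the latter. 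Applying Proposition \ref{prop_topologically_free} to $I_{\theta+1}/I_\theta$ with this projection yields central freeness on $F_\theta Q A^{**}$; in particular $\alpha^{**}_g$ is not centrally trivial there. Since central triviality passes to direct summands, $\alpha^{**}_g$ is not centrally trivial on $QA^{**}$ either, proving that $\alpha^{**}_g|_{zA^{**}}$ is properly centrally non-trivial.

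The main obstacle is the bookkeeping between the projection $P \in Z(B^{**})$, which need not lie in $A^{**}$, and the $A^{**}$-internal decomposition $\{F_\theta\}$. One has to track the passage through the isomorphism $PA^{**} \cong zA^{**}$ carefully, and verify that the cutdowns by $F_\theta Q$ genuinely realize central invariant summands of $(I_{\theta+1}/I_\theta)^{**}$ to which Proposition \ref{prop_topologically_free} applies. A secondary technical issue is that the composition series is transfinite, so the sums $\sum_\theta F_\theta = 1$ should be understood in the weak operator topology; separability of $zA^{**}$ is what ensures that the effective decomposition is countable, so that some $F_\theta Q$ must be nonzero whenever $Q \neq 0$.
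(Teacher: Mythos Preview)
Your approach is essentially the same as the paper's. The paper's proof consists of two sentences: first it reduces from $P\in Z(B^{**})$ to central invariant projections in $A^{**}$ (invoking the approximate identity hypothesis), and then it cites Proposition~\ref{prop_topologically_free} together with the decomposition $A^{**}=J^{**}\oplus (A/J)^{**}$ applied along the composition series. Your argument carries out exactly this plan, but with the reduction made explicit via the kernel projection $z$ and the composition-series decomposition spelled out in terms of the projections $F_\theta$; the ``main obstacle'' you flag is precisely the detail the paper suppresses.
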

\begin{proof}
Since $A$ contains an approximate identity of $B$, it suffices to show that for every central $\alpha^{**}$-invariant projection $P\in A^{**}$ such that $PA^{**}P$ is separable, the action $\alpha^{**}$ is centrally free on $PA^{**}P$. The now proof follows from Proposition~\ref{prop_topologically_free} and the fact that $A^{**}= J^{**}\oplus (A/J)^{**}$ for any ideal $J$ of $A$. 
\end{proof}
\begin{remark}
Note, when $\Gamma$ is finite and $A$ is a type I $C^*$-algebra, then one can obtain a $\Gamma$-invariant composition series $\{I_\theta\,|\,0\leq\theta\leq\beta \}$, such that $I_{\theta+1}/I_{\theta}$, for $\theta<\beta$ has Hausdorff spectrum by a small observation as follows. By  \cite[Theorem 6.2.11]{gk}, $A$ contains an essential ideal $I$ such that $I$ has continuous trace and hence has Hausdorff spectrum. Define $J:=\cap_{g\in\Gamma} \alpha_g(I)$. Then $J$ is a nonzero closed two-sided ideal in $A$ having a Hausdorff spectrum. This gives the start of our composition series. We continue the similar procedure for $A/J$ and proceed by transfinite induction. We do not know whether one can always find $\Gamma$-invariant composition series of this form if $\Gamma$ is infinite.
\end{remark}

The following is a sufficient condition for pointwise finite Rokhlin dimension.
\begin{lem}
	Let $A$ be a separable unital nuclear simple stably finite $\cZ$-stable $C^*$-algebra such that the trace space $T(A)$ is a Bauer simplex with finite-dimensional extreme boundary. Let $\Gamma$ be a discrete group and let $\alpha \colon \Gamma \to \aut(A)$ be an action such that for any $s \in \Gamma \smallsetminus \{e\}$ and for any $\alpha_s$-invariant trace $\tau$, the extension of $\alpha_s$ to the GNS closure $\overline{A}^{\|\cdot\|_{\tau}}$ is outer. Then, the action $\alpha$ has a pointwise finite Rokhlin dimension.
\end{lem}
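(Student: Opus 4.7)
The plan is to fix an arbitrary $s \in \Gamma \smallsetminus \{e\}$ and analyze the restriction of $\alpha$ to the cyclic subgroup $H_s := \langle s \rangle$, which is isomorphic either to $\bZ$ or to $\bZ/n\bZ$ for some $n \geq 2$. The task then reduces to showing that the single action $\alpha|_{H_s} \colon H_s \to \aut(A)$ has finite Rokhlin dimension in the sense recalled earlier.

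First I would verify that the restricted action is \emph{strongly outer} in the sense used in the current literature on Rokhlin dimension: for every $g \in H_s \smallsetminus \{e\}$ and every $\alpha_g$-invariant tracial state $\tau$ on $A$, the canonical extension of $\alpha_g$ to the tracial GNS closure $\pi_\tau(A)''$ is outer. This is exactly our standing hypothesis. Because $A$ is simple, separable, nuclear and stably finite, each trace $\tau \in T(A)$ is faithful and $\pi_\tau(A)''$ is a finite factor (a $\mathrm{II}_1$-factor provided $A$ is infinite-dimensional, the finite-dimensional case being trivial), so the outerness we have assumed matches the standard notion.

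Next I would invoke the now-standard theorem of Gardella--Hirshberg--Vaccaro on the Rokhlin dimension of strongly outer actions of amenable groups on well-behaved $C^*$-algebras (building on the line of work of Matui--Sato, Hirshberg--Winter, Sato, Liao and others): every strongly outer action of $\bZ$ or of a finite cyclic group on a separable, unital, simple, nuclear, stably finite, $\cZ$-stable $C^*$-algebra whose trace simplex is a Bauer simplex with finite-dimensional extreme boundary has finite Rokhlin dimension. Every hypothesis of this theorem is satisfied by $(A,\alpha|_{H_s})$, so we conclude that $\alpha|_{H_s}$ has finite Rokhlin dimension. Since $s \in \Gamma \smallsetminus \{e\}$ was arbitrary, this is exactly pointwise finite Rokhlin dimension.

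The proof is essentially an appeal to existing literature, so the main obstacle is really just the bookkeeping step of matching our hypotheses to those of the cited black-box theorem; in particular, one has to note that simplicity plus stable finiteness plus nuclearity of $A$ automatically promotes "outerness on each invariant-tracial GNS" to the factor-wise strong outerness condition used in the classification literature, so there is no additional approximation or tower construction to perform here.
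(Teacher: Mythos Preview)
Your proposal is correct and follows essentially the same route as the paper: the paper's proof is a one-line appeal to the combination of \cite{GHV} (Gardella--Hirshberg--Vaccaro) and \cite{wouters}, which together give finite Rokhlin dimension for each restriction $\alpha|_{\langle s\rangle}$ under exactly the hypotheses stated. One small inaccuracy worth flagging: $\pi_\tau(A)''$ is a factor only for \emph{extremal} $\tau$, not for arbitrary traces, but this does not affect the argument since the hypothesis is already phrased in the form the cited theorems require.
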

\begin{proof}
	This follows immediately from the combination of \cite[Theorems A, B]{GHV} and \cite[Theorem C]{wouters}.
\end{proof}

We end this section by providing examples for which the hypotheses $F(A)\subseteq F(B)$ or
$(PA 
	P)'\cap (PA^{**}P)^\omega=  (PB P)'\cap (PA^{**}P)^\omega$, for any central projection in $P$ in $B^{**}$ such that $PB^{**}P $ is separable, 
are automatic.     
\begin{lem}
	\label{lemma:automatic-central-sequence-inclusion}
If $A\subseteq B$ is an inclusion of separable $C^*$-algebras such that $A$ contains an approximate identity of $B$, then the following conditions imply that $F(A)\subseteq F(B)$. For any central projection $P$ in $B^{**}$ such that $PB^{**}P$ is separable, we furthermore have $(PA 
	P)'\cap (PA^{**}P)^\omega=  (PB P)'\cap (PA^{**}P)^\omega$.

\end{lem}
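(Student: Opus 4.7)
My plan is to derive both conclusions from the inclusion $\iota\colon A \hookrightarrow B$ via a descent argument at the level of bounded sequences, using the approximate-identity hypothesis together with the Ando--Kirchberg identification (Proposition~\ref{prop_ando_kirchberg}) for the ultrapower portion.

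For $F(A)\subseteq F(B)$, the map $\iota$ lifts to $\ell^\infty(\bN,A)\hookrightarrow\ell^\infty(\bN,B)$, and I would show that this descends to a well-defined injective $*$-homomorphism $F(A)\to F(B)$. The annihilator part is straightforward: given $(a_n)\in\text{Ann}(A,A_\infty)$ and $b\in B$, the approximate identity hypothesis yields $e\in A$ with $\|eb-b\|,\|be-b\|<\varepsilon$, so
\[
\|a_n b\| \le \|a_n e\|\,\|b\| + \|a_n\|\,\varepsilon,
\]
and $\|a_n e\|\to 0$ combined with arbitrary $\varepsilon$ forces $\|a_n b\|\to 0$ (symmetrically for $\|ba_n\|$). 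The non-trivial step is showing that $\iota$ sends $A_\infty\cap A'$ into $B_\infty\cap B'$ modulo $\text{Ann}(B,B_\infty)$; this is where the additional lemma hypotheses beyond the approximate identity must intervene. Injectivity is immediate, since $(a_n)\in\text{Ann}(B,B_\infty)$ restricted to $A\subseteq B$ forces $(a_n)\in\text{Ann}(A,A_\infty)$.

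For the ultrapower identity, the inclusion $(PBP)'\cap (PA^{**}P)^\omega \subseteq (PAP)'\cap (PA^{**}P)^\omega$ is immediate from $PAP\subseteq PBP$. For the reverse, I would apply Ando--Kirchberg to replace $(PAP)'\cap(PA^{**}P)^\omega$ by the von Neumann relative commutant $(PA^{**}P)'\cap(PA^{**}P)^\omega$, using that $PAP$ is weakly dense in $PA^{**}P$. It then suffices to transfer commutation with $PA^{**}P$ to commutation with $PBP$ in the $\|\cdot\|_\varphi^\#$-sense, approximating each $PbP$ for $b \in B$ by elements of $PA^{**}P$ via the approximate identity, and combining these approximations with the commutator bounds $\|[x^\nu, PaP]\|_\varphi^\#\to 0$ for $a\in A$.

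The principal obstacle is the central-sequence transfer in the first assertion. A naive approximate-identity manipulation of $[a_n,b]\,b'$ leads to a circular bound
\[
\|[a_n,b]\,b'\| \lesssim \|[a_n,b]\,b'\| + O(\varepsilon),
\]
because rewriting $b \approx eb$ with $e\in A$ and pushing $a_n$ past $e$ reproduces the very quantity one is trying to control. The supplementary hypotheses of the lemma---for instance, a structural condition such as $B$ being generated by $A$ together with $A'\cap B$, or a strict-topology embedding $B\subseteq M(A)$ placing $B$ inside $A^{**}$---must supply the missing handle to break this circularity. Once the central-sequence transfer is in place, the ultrapower claim follows along parallel lines, with the $\|\cdot\|_\varphi^\#$-setting permitting more flexible Kaplansky-type density arguments than the norm setting of the first part.
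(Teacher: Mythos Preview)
Your outline correctly isolates the core difficulty: passing from $A'$-centrality to $B'$-centrality cannot be achieved from the approximate-identity hypothesis alone, and you are right that the naive commutator manipulation is circular. However, your proposal does not actually resolve this gap; you defer it to unspecified ``supplementary hypotheses.'' The paper's lemma is not a single abstract statement but a list of three concrete structural conditions on the inclusion (namely: $A$ central in $B$; $B$ a $C^*$-completion of $A\otimes_{\mathrm{alg}}D$ with $D$ unital; or $B=A\rtimes_{\beta,r}H$ with $\beta$ pointwise inner), and the proof treats each case directly by exploiting that specific structure. In case~(2), for instance, an element of $PBP$ is approximated by finite sums $P\sum_j(a_j\otimes d_j)P$, and one estimates $\|[c_n,P(a_j\otimes d_j)P]\|_\varphi$ by factoring out $d_j$ and reducing to the known bound on $\|[c_n,a_j]\|_{\varphi|_{PAP}}$; in case~(3) one uses that $\lambda(s)$ acts on $A$ by conjugation with a unitary $u_s\in A$, so $[c_n,e\lambda(s)]$ is controlled by $[c_n,e]$ and $[c_n,u_s^*]$, both of which vanish by hypothesis.

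There is also a genuine error in your plan for the ultrapower identity. You propose to invoke Ando--Kirchberg to obtain $(PAP)'\cap(PA^{**}P)^\omega=(PA^{**}P)'\cap(PA^{**}P)^\omega$ and then ``approximate each $PbP$ for $b\in B$ by elements of $PA^{**}P$ via the approximate identity.'' But the approximate identity gives $e_\lambda b\to b$ with $e_\lambda b\in B$, not in $A^{**}$; there is no reason in general for $PBP\subseteq PA^{**}P$, so no such approximation is available. The paper does not attempt to embed $B$ into $A^{**}$; instead, working with a fixed normal state $\varphi$ on $PB^{**}P$, it uses the $C^*$-to-$W^*$ picture only to identify $(PAP)'\cap(PA^{**}P)^\omega$ with the appropriate sequence algebra, and then verifies asymptotic commutation with elements of $PBP$ directly in $\|\cdot\|_\varphi^\#$, again using the explicit structure of each of the three cases. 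Your approach would need a mechanism that genuinely relates $B$ to $A^{**}$, and none of the three conditions provides one.
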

\begin{enumerate}
	\item \label{C(X)algebra}$A$ is a subalgebra of the center of $B$ (so $A \cong C(X)$, where $X$ is a compact Hausdorff space and 
	$B$ is a unital $C(X)$-algebra).
	\item\label{tensor} $B$ is any $C^*$-algebra completion of  the algebraic tensor product of $C \otimes_{\text{alg}} D$, where $D$ is unital, and $A = C \otimes 1_{D}$.
	\item\label{new} Suppose $A$ is a separable $C^*$-algebra, let $H$ is a 
	countable discrete group, let $\beta \colon H \to A$ be a pointwise 
	inner action, and assume $B = A \rtimes_{\beta,r} H$.
\end{enumerate}
 (Notice that the third condition  
does not imply that the action is inner, so $B$ need not be isomorphic 
to $A \otimes_{\min} C^*_r(H)$; for instance, there is an example 
of an action of $\bZ_2 \times \bZ_2$ on $M_2$ such that the crossed 
product is isomorphic to $M_4$.)

As the part about the inclusion $F(A) \subseteq F(B)$ is straightforward, we only provide details for the second part of the statement.

\textit{Proof of Part~(\ref{C(X)algebra})}:
This is immediate.

\textit{Proof of Part~(\ref{tensor})}:  Let $\varphi$ be the faithful normal state on $PB^{**}P$. By Proposition \ref{prop_ando_kirchberg}, in order to show $(PA 
	P)'\cap (PA^{**}P)^\omega\subseteq (P(A\otimes D) P)'\cap (PA^{**}P)^\omega$, it suffices to show 
$(PAP)'\cap (PAP,\varphi|_{PAP})_\omega\subseteq (P(A\otimes D)P)'\cap (PAP,\varphi|_{PAP})_\omega$. To this end, let $(c_n)_n\in\ell^{\infty}(\bN,PAP)$ such that 
\[
 \lim_{n \to \omega}\|c_na-ac_n\|^\#_{\varphi|_{PAP}} = 0,\,\,\,\forall a\in A .
\]
Pick $(z_1,z_2,\hdots) \in \ell^{\infty}(\bN,A)$ such that $c_n=Pz_nP$ for $z_n\in A$.
Let $y\in P(A\otimes D)P$, we show 
\[
\lim_{n \to \omega} \|c_ny-yc_n\|^\#_{\varphi|{P(A\otimes D)P}} = 0 .
\]
For this, set $M:=\text{sup}\{\|z_n\| , n \in \bN\}$, which we may assume is nonzero; otherwise there is nothing to prove. Take $\varepsilon>0$ and let $\sum_{j=1}^k(a_j\otimes d_j)\in A\otimes  D$ such that 
\[
\left \|y-P\sum_{j=1}^k(a_j\otimes d_j)P \right \|<\frac{\varepsilon}{4M} .
\]
Now take $L =\text{max}\{\|d_j\|,\,j=1,2,\hdots,k\}$ and $S =\text{max}\{\|c_na_j-a_jc_n\|^{1/2},\, n \in \bN ,\, k=1,2,\hdots,k\}$. Now choose $W \in \omega$ such that for all $n\in W$ and for  $j=1,2,\hdots,k$ we have
\[
\|c_na_j-a_jc_n\|^{1/2}_{\varphi|_{PAP}}<\frac{\varepsilon}{2kSL} .
\]
Note that for $x\otimes y\in A\otimes_{\text{min}}D$ we have
\begin{equation*}
\begin{split}
 \|P(x\otimes y)P\|_{\varphi|_{P(A\otimes D)P}}^2 &=|\varphi(P(x^*x\otimes y^*y)P)|\\
 &=|\varphi(P(1\otimes y^*y)P P(x^*x\otimes 1)P)|\\
 &\leq \varphi(P(x^*xx^*x\otimes 1)P)^{1/2}\varphi(P(1\otimes y^*yy^*y)P)^{1/2}\\
 &\leq \|P(x\otimes 1)P\|\|y\|^2\|P(x\otimes 1)P\|_{\varphi|_{PAP}} .
 \end{split}
 \end{equation*}
 For any $n\in W$ we have 
\begin{equation*}
\begin{split}
\|[c_n,y]\|_{\varphi|_{P(A\otimes D)P}} &\leq \|[c_n,y-P\sum_{j=1}^k(a_j\otimes d_j)P]\|+\|[c_n,P\sum_{j=1}^k(a_j\otimes d_j)P]\|_{\varphi|_{P(A\otimes D)P}}\\
&\leq 2\|c_n\|\|y-P\sum_{j=1}^k(a_j\otimes d_j)P\|+\|[c_n,P\sum_{j=1}^k(a_j\otimes d_j)P]\|_{\varphi|_{P(A\otimes D)P}}\\
&\leq \frac{\varepsilon}{2}+\|[c_n,P\sum_{j=1}^k(a_j\otimes d_j)P]\|_{\varphi|_{P(A\otimes D)P}} ,
\end{split}
\end{equation*}
and 
\begin{equation*}
\begin{split}
\|[c_n,P\sum_{j=1}^k(a_j\otimes d_j)P]\|_{\varphi|_{P(A\otimes D)P}} &\leq \|\sum_{j=1}^k P((z_na_j-a_jz_n)\otimes d_j)P\|_{\varphi|_{P(A\otimes D)P}}
\\
&\leq \sum_{j=1}^k\|P((z_na_j-a_jz_n)\otimes d_j)P\|_{\varphi|_{P(A\otimes D)P}}\\
&\leq \sum_{j=1}^k \|c_na_j-a_jc_n\|^{1/2}\|d_j\|\|c_na_j-a_jc_n\|_{\varphi|_{PAP}}^{1/2}\\
&\leq \frac{\varepsilon}{2} .
\end{split}
\end{equation*}
Thus, for all $n\in W$ we have $\|[c_n,y]\|_{\varphi|_{P(A\otimes D)P}}<\varepsilon$. Similarly, one gets $\|[c_n,y]^*\|_{\varphi|_{P(A\otimes D)P}} \underset{n \to \omega}{\longrightarrow} 0$, hence $\|[c_n,y]\|_{\varphi|_{P(A\otimes D)P}}^\# \underset{n \to \omega}{\longrightarrow}  0$.

\textit{Proof of Part~(\ref{new})}: Since $\beta$ acts by unitary conjugation, for each $s\in H$, there 
exists a unitary element $u_s\in  A $ such that 
$\beta_s(a)=u_sau_s^*$ for $a\in A $. Following the same argument as in the previous section, it suffices to show that for any faithful normal state $\varphi$ on $P(A\rtimes_{\beta,r} H)^{**}P$ with $P$ a central projection, we have 
\[
(PAP)'\cap (PAP,\varphi|_{PAP})_\omega\subset (P(A\rtimes_{\beta,r} H)P)'\cap(PAP,\varphi|_{PAP})_\omega .
\]
Let $(c_n)_n$ be a sequence in $\ell^{\infty}(\bN,PAP)$ such that $\|c_na-ac_n\|_{\varphi|_{PAP}}^\#  \underset{n \to \omega}{\longrightarrow} 0$ for all $a\in A$. Pick $(z_1,z_2,\hdots) \in \ell^{\infty}(\bN,A)$ such that $c_n=Pz_nP$ for $z_n\in A$.
Let $y\in  P(A \rtimes_{\beta,r} H)P$. We show that $\|c_ny-yc_n\|_{\varphi|P(A\rtimes_{\beta,r} H)P}^\# \underset{n \to \omega}{\longrightarrow} 0$. Since $(c_n)$ is bounded, it suffices to show the above condition for a dense set of $ P(A\rtimes_{\beta,r} H)P$. To this end, fix $e_1,e_2,\ldots,e_k\in  A $, $s_1,s_2,\ldots,s_k\in H$ and $\varepsilon>0$.

For $l=1,2,\hdots,k$, define states $\varphi_l$ on $P(A\rtimes_{\beta,r} H)^{**}P$ by $\varphi_l(z)=\varphi(P\lambda
(s_l)^*z\lambda(s_l)P)$. Because $\|\cdot\|_\varphi^\#$ generates the *-strong topology on the unit ball of $P(A\rtimes_{\beta,r} H)^{**}P$, there exists $W \in \omega$, such that for all $n\in W$ and for all $l=1,2,\hdots k$ we have
\[
\|c_ne_l-e_lc_n\|_{\varphi_l|PAP}<\frac{\varepsilon}{2k} \,\,\text{and}\,\,\|u_{s_l}^*c_n-c_nu_{s_l}^*\|_{\varphi_l|PAP}<\frac{\varepsilon}{2\text{sup}\{\|e_l\|\}k}
\]Then for all $n\geq N$, we have
\begin{equation*}
\begin{split}
\|[c_n,P\sum_{l=1}^k e_l\lambda(s_l)P]\|_{\varphi|P(A\rtimes_{\beta,r} H)P} &=\|\sum_{l=1}^k P(z_ne_l\lambda(s_l)-e_l\lambda(s_l)z_n)P\|_{\varphi|P(A\rtimes_{\beta,r} H)P}\\
&=\|\sum_{l=1}^k P( z_ne_l\lambda(s_l)-e_l\beta_{s_l}(z_n)\lambda(s_l))P\|_{\varphi|P(A\rtimes_{\beta,r} H)P}\\
&=\|\sum_{l=1}^k P(z_ne_l\lambda(s_l)-e_lu_{s_l}z_nu_{s_l}^*\lambda(s_l))P\|_{\varphi|P(A\rtimes_{\beta,r} H)P}\\
&\leq\sum_{l=1}^k\|P(z_ne_l-e_lz_n)P P\lambda(s_l)P\|_{\varphi|P(A\rtimes_{\beta,r} H)P}\\ &+\sum_{l=1}^k\|e_l\|\|P(u_{s_l}^*z_n-z_nu_{s_l}^*)P P\lambda(s_l)P\|_{\varphi|P(A\rtimes_{\beta,r} H)P}\\ &=\sum_{l=1}^k\|c_ne_l-e_lc_n\|_{\varphi_l|PAP}+\sum_{l=1}^k\|e_l\|\|u_{s_l}^*c_n-c_nu_{s_l}^*\|_{\varphi_l|PAP}\\
&<\varepsilon .
\end{split}
\end{equation*}
Similar calculations show that $\|[c_n,P\sum_{l=1}^k e_l\lambda(s_l)P]^*\|_{\varphi|P(A\rtimes_{\beta,r} H)P} \underset{n \to \omega}{\longrightarrow}  0$.
\qed

We conclude by stating two special cases of the conditions above to provide a sample of self-contained statements.
\begin{cor}
	Let $D$ be a separable unital $C^*$-algebra. Let $A$ be a separable nuclear $C^*$-algebra, and let $B = A \otimes D$. Let $\Gamma$ be a countable discrete group, and let $\alpha \colon \Gamma \to \aut(B)$ be an action that leaves $A$ invariant. In the following two cases, any intermediate subalgebra $A \rtimes_{\alpha,r} \Gamma \subseteq C \subseteq B \rtimes_{\alpha,r} \Gamma$ is a crossed product:
	\begin{enumerate}
		\item $A$ is type I, $\Gamma$ is finite, and the action of $\Gamma$ on $\hat{A}$ is free.
		\item  $A$ be a separable unital nuclear simple stably finite $\cZ$-stable $C^*$-algebra such that the trace space $T(A)$ is a Bauer simplex with finite-dimensional extreme boundary and for any $s \in \Gamma \smallsetminus \{e\}$ and for any $\alpha_s$-invariant trace $\tau$, the extension of $\alpha_s$ to the GNS closure $\overline{A}^{\|\cdot\|_{\tau}}$ is outer. 
	\end{enumerate}
\end{cor}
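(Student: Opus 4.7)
The plan is to deduce both parts of the corollary from \thref{mainintmresult} together with the structural results established earlier in this section; no new techniques are required. The tensor-product structure $B = A \otimes D$ with $D$ unital already supplies two key ingredients for free: $A \cong A \otimes 1_D$ contains an approximate identity of $B$, and by \thref{lemma:automatic-central-sequence-inclusion}\,(\ref{tensor}), for every central $\alpha^{**}$-invariant projection $P \in B^{**}$ with $PB^{**}P$ separable, we have the equality
\[
(PAP)' \cap (PA^{**}P)^{\omega} \;=\; (PBP)' \cap (PA^{**}P)^{\omega}
\]
of relative commutants, as well as an inclusion $F(A) \subseteq F(B)$. Since $A$ is nuclear and separable and $D$ is separable, $B$ is separable.

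For Case (1), finiteness of $\Gamma$ immediately yields property AP via amenability. Using the remark after the corollary on type I algebras, one constructs a $\Gamma$-invariant composition series $\{I_\theta\}_{0 \leq \theta \leq \beta}$ of $A$ whose successive subquotients $I_{\theta+1}/I_\theta$ have Hausdorff spectrum: start with an essential continuous-trace ideal $I \subseteq A$, replace it by $\bigcap_{g \in \Gamma} \alpha_g(I)$ (which remains nonzero and essential because $\Gamma$ is finite), and iterate by transfinite induction on the quotient. Combining this composition series and the freeness of the action of $\Gamma$ on $\hat A$ with \thref{prop_topologically_free} and the corollary following it gives that $\alpha^{**}$ acts centrally freely on $PA^{**}P$ for every such $P$. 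All the hypotheses of \thref{mainintmresult}\,(\ref{mainthm:condition:A**}) are therefore met.

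For Case (2), under the standing assumption (implicit in any invocation of the main theorem) that $\Gamma$ has property AP, the preceding lemma on stably finite $\mathcal{Z}$-stable $C^*$-algebras shows that the restriction $\alpha \colon \Gamma \to \aut(A)$ has pointwise finite Rokhlin dimension. Combined with the inclusion $F(A) \subseteq F(B)$ obtained above, the remark following the definition of relative Rokhlin dimension yields that the action on the inclusion $A \subseteq B$ has pointwise finite relative Rokhlin dimension as well. An application of \thref{mainintmresult}\,(\ref{mainthm:condition:Rokhlin-dimension}) then finishes the argument.

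The proof is essentially bookkeeping once the preparatory material of this section is in hand. The one subtle point worth flagging is the construction of the $\Gamma$-invariant composition series in Case (1), where finiteness of $\Gamma$ is used crucially to keep $\bigcap_{g \in \Gamma} \alpha_g(I)$ nonzero; the remaining ingredients are direct applications of the lemmas already proven.
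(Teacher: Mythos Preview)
Your proposal is correct and matches the paper's intended argument: the corollary is stated without proof, as an immediate combination of \thref{mainintmresult} with Lemma~\ref{lemma:automatic-central-sequence-inclusion}(\ref{tensor}), the type~I corollary (together with the finite-group remark on invariant composition series) for Case~(1), and the Bauer-simplex lemma for Case~(2). You have also rightly flagged that property~AP for $\Gamma$ in Case~(2) is needed to invoke the main theorem but is not explicitly listed among the hypotheses of the corollary as stated.
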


\bibliographystyle{plain}
\bibliography{Cd}

\end{document}